\pgfplotsset{compat=1.13}
\newtheoremstyle{indented}
  {3pt}
  {3pt}
  {\addtolength{\multlinegap}{2em}\setlength{\leftskip}{2em}\addtolength{\@totalleftmargin}{2em}%
  \setlength{\rightskip}{2em}}
  {}
  {\bfseries}
  {.}
  {.5em}
  {}
\theoremstyle{indented}
\newtheorem{theorem}{Theorem}[section]
\newtheorem{lemma}[theorem]{Lemma}
\newtheorem{corollary}[theorem]{Corollary}
\newtheorem{remark}[theorem]{Remark}
\newtheorem{assumption}[theorem]{Assumption}
\DeclareMathOperator{\Tr}{Tr}
\newcommand{\argmin}{\operatornamewithlimits{arg\,min}}
\newcommand{\R}{\mathbb{R}}
\newcommand{\E}{\mathbb{E}}
\newcommand{\interior}[1]{%
  {\kern0pt#1}^{\mathrm{o}}%
}
\newcommand{\closure}[2][3]{{}\mkern#1mu\overline{\mkern-#1mu#2}}
\newcommand{\bdry}{\partial}
\newcommand{\bigO}{\mathcal{O}}
\newcommand{\smallo}{o}
\newcommand{\1}{1}
\definecolor{lightgray}{gray}{0.66}
\newcommand{\muspace}{\mathcal{M}}
\newcommand{\xmuspace}{\mathcal{O}}
\newcommand{\mudrift}{\kappa}
\newcommand{\musigma}{\tilde{\sigma}}
\newcommand{\muW}{\tilde{W}}
\newcommand{\mumean}{\bar{\mu}}
\newcommand{\xsigma}{\sigma}
\newcommand{\muthresh}{\mu^*}
\newcommand{\process}[1]{(#1_t)_{t \geq 0}}
\newcommand{\rovalue}{V_a}
\title{Optimal dividend policies with random profitability\footnote{The first and third authors were partly supported by the ETH Foundation, the Swiss Finance Institute, and Swiss National Foundation grant SNF 200020\_172815. The second author acknowledges support by the SCOR.}}
\author{
    Max Reppen\thanks{ETH Z\"urich, Department of Mathematics, R\"amistrasse 101, 8092 Z\"urich, Switzerland, email \texttt{max.reppen@math.ethz.ch}.}
    \and
    Jean-Charles Rochet\thanks{SFI@UNIGE and UZH, and TSE/IDEI, email \texttt{jeancharles.rochet@gmail.com}.}
    \and
    H.\ Mete Soner\thanks{ETH Z\"urich, Department of Mathematics, R\"amistrasse 101, 8092 Z\"urich, Switzerland,
    and Swiss Finance Institute, email \texttt{mete.soner@math.ethz.ch}.}
}
\date{March 1, 2018}
\begin{document}

\maketitle

\begin{abstract}
	\noindent We study an optimal dividend problem under a bankruptcy
	constraint. Firms face a trade-off between potential bankruptcy and
	extraction of profits. In contrast to previous works, general cash flow
	drifts, including Ornstein--Uhlenbeck and CIR processes, are considered. We
	provide rigorous proofs of continuity of the value function, whence dynamic
	programming, as well as comparison between the sub- and supersolutions of
	the Hamilton--Jacobi--Bellman equation, and we provide an efficient and
	convergent numerical scheme for finding the solution. The value function is
	given by a nonlinear PDE with a gradient constraint from below in one
	dimension. We find that the optimal strategy is both a barrier and a band
	strategy and that it includes voluntary liquidation in parts of the state
	space. Finally, we present and numerically study extensions of the model,
	including equity issuance and credit lines. 
\end{abstract}

\section{Introduction}
The problem of optimizing dividend flows has its origins in the actuarial field
of ruin theory, which was first treated theoretically by Lundberg
\cite{lundberg1903} in 1903. The theory typically models an insurance firm, and
initially revolved around minimizing the probability of ruin. However, in many
situations in practice, there is an emphasis on maximizing the shareholder
value---an idea which fits well into de Finetti's \cite{deFinetti1957} 1957
proposal to optimize the net present value of dividends paid out until the time
of ruin. With a positive discount rate of the dividends, de Finetti solved the
problem for cash reserves described by a random walk. Since then, this new
class of dividend problems has been extensively studied, especially in the
context of modelling insurance firms.

Although a dividend problem can be seen as assigning a value to a given cash
flow, the problem formulation nevertheless retains an emphasis on the ruin
time. This is contrasted by cash flow valuation principles such as real option
valuation, first introduced by Myers \cite{myers1977} in 1977. Whereas the
dividend problem seeks the value of a cash flow after passing through a
\emph{buffer} (the cash reserves), the real option approach evaluates a cash
flow without such a buffer. In other words, the latter is a valuation of a cash
flow without any liquidity constraint, as opposed to the optimal dividend
problem where the firm can reach ruin. The real option value thus provides a
natural bound for the optimal dividend value, which turns out to be helpful in
our analysis.

In the actuarial literature, the cash reserves are commonly described by a
spectrally negative Lévy process with a positive drift of premiums and negative
jumps of claims. Our direct focus is not an insurance firm, and we instead
study cash reserves described by a diffusion process. Although this is not the
natural insurance perspective, it is studied also there as the limiting case of
the jump processes, as initiated by \citet{iglehart1965}.

Formulated as a problem of `storage or inventory type', the general diffusion
problem with singular dividend policies was solved by Shreve et.\ al
\cite{shreve1984} in 1984. In the case of constant coefficients in the cash
reserves dynamics, Jeanblanc-Picqué \& Shiryaev \cite{jeanblanc1995} found the
solution by considering limits of solutions to problems with absolutely
continuous dividend strategies. The optimal solution to this singular problem
formulation is described by a so-called \emph{barrier strategy}, which yields a
reflected cash reserves process by paying any excess reserves as dividends.
This divides the state space into two regions: dividends are paid above the
barrier (\emph{dividend region}), but not in the region between zero and the
barrier (\emph{no-dividend region}). This is contrasted by dividend \emph{band
strategies} which frequently appear in jump models and were first identified by
Gerber \cite{gerber1969entscheidungskriterien}. Instead of the two spatial
regions, there then exists at least one no-dividend region in which the origin
is not contained. It thus creates a band-shaped no-dividend region in-between
two dividend regions.

In financial and economics literature, the main focus is on diffusion models,
and extensions often involve nonconstant interest rate, drift and/or diffusion
coefficients. Indeed, external, macroeconomic conditions and their effects on
profitability have a substantial impact on dividend policies, as shown by
\citet{gertler1991corporate} and more recently by \citet{hackbarth2006capital}.
Such macroeconomic effects have been studied in various forms. In particular,
\citet{anderson2012corporate} as well as \citet{barth2016non} numerically study
continuously changing stochastic parameters, whereas
\citet{akyildirim2014optimal} consider stochastic interest rates following a
Markov chain, and \citet{jiang2012optimal} consider model coefficients and
interest rate both governed by Markov chains. \citet{bolton2013market}
similarly study the macroeconomic impact on both financial and investment
opportunities. In contrast to coefficients influenced by macroeconomic factors,
\citet{radner1996} already in 1996 modelled a firm which alternates between
different operating strategies, thereby effectively controlling the model
coefficients. Other extensions include transaction costs of dividend payments
or the possibility of equity issuance, cf.\ \cite{akyildirim2014optimal,
bolton2013market, decamps2011free}. The papers by \citet{cai2006optimal}, and
\citet{cadenillas2007optimal} both treat different models with mean-reverting
cash reserves, contrasting the model here, where we instead consider
mean-reverting profitability. Finally,  \citet{avanzi2012mean} and
\citet{albrecher2017risk} propose dividend processes proportional and affine in
the cash reserves respectively as a means to capture the more stable dividend
streams seen in practise. For further references, we refer the reader to
\cite{asmussen2010ruin, albrecher2009optimality} and the references therein.

Our choice of diffusion model has a continuous, stochastic drift generated by a
separate profitability process. This structure yields a two-dimensional
problem in which the dividend strategy depends on the current profitability.
In particular, for low (negative) rates, a band strategy is optimal, but at
higher rates, dividends are optimally paid according to a barrier strategy,
with a barrier level depending on the profitability. Additionally, for very
low rates, we prove that it is optimal to perform a voluntary liquidation,
meaning that all cash reserves are paid instantaneously. Band structures are
common for jump models, but here appear in a continuous model.\footnote{Similar
	properties have been observed by \citet{anderson2012corporate} and
\citet{murto2014exit}.} Finally, in addition to qualitative and numerical
results, we provide proofs for continuity of the value function as well as a
comparison principle for the dynamic programming equation.

The paper is organized as follows: We begin by describing the problem in Section \ref{sec:dynamics}, followed by our assumptions and analytical results in Section \ref{sec:technical_statements}. In Section \ref{sec:numerics}, we present a numerical algorithm as well as its results. Intuition for these results are then provided by studying a related, simpler problem in Section \ref{sec:deterministic}. Finally, we suggest and numerically study some possible extensions of the model in Section \ref{sec:extensions}. At the end of the paper, after the concluding comments in Section \ref{sec:conclusion}, the proofs of the statements in Section \ref{sec:technical_statements} are given in Section \ref{sec:technical_proofs}.

\section{Problem formulation}

\label{sec:dynamics}
Consider a \emph{cash flow} on the form
\[
    \dif{C}^{\mu}_t = \mu^{\mu}_t \dif{t} + \xsigma \dif{W}_t, \quad C^{\mu}_0 = 0,
\]
where $W$ is a Brownian motion and the \emph{profitability} (cash flow rate) $\mu^\mu_t$ is described by
\[
    \dif{\mu}^{\mu}_t = \mudrift (\mu^\mu_t) \dif{t} + \musigma(\mu^\mu_t) \dif{\tilde{W_t}}, \quad \mu^{\mu}_0 = {\mu},
\]
for some functions $\mudrift$ and $\musigma$, as well as another Brownian
motion $\muW$ with correlation $\rho \in [-1,1]$ to $W$. Despite the formulation of
$\mu^\mu_t$ as a continuous process, most of the results extend naturally to the
Markov chains studied in the literature.

The precise assumptions on the diffusion, given in Assumption \ref{ass:regularity}, include Ornstein--Uhlenbeck processes
\[  \dif{\mu}^\mu_t = k (\mumean - \mu^\mu_t) \dif{t} + \musigma \dif{\muW}_t,  \]
for constants $k > 0$, $\mumean$, and $\musigma$ as well as another commonly considered process, the Cox--Ingersoll--Ross (CIR) process:
\[  \dif{\mu}^\mu_t = k (\mumean - \mu^\mu_t) \dif{t} + \musigma \sqrt{\mu^\mu_t - a} \dif{\muW}_t,  \]
for constants $k > 0$, $\mumean$, $\musigma$, and $a$. In fact, the Assumption \ref{ass:regularity} only imposes asymptotic conditions as $|\mu| \to \infty$. This means that on any given bounded domain, $\mudrift$ and $\musigma$ can be general, provided certain growth conditions are satisfied outside the bounded domain, and provided the SDE has a well-defined solution. This is naturally satisfied by bounded processes. Interpreting $-\mudrift$ as the derivative of some potential, it also includes the possibility of potentials with multiple wells (local minima), thus having several points of attraction.

We model a firm whose cash flow is given by the process $C^{\mu} = \process{C^{\mu}}$. The firm pays dividends to its shareholders using cash accumulated from the cash flow. Let $L_t$ denote the \emph{cumulative dividends} paid out until time $t$. Then the cash reserves $X = \process{X}$ of a firm with initial cash level $x$ can be written as
\[
    \dif{X}^{(x,\mu), L}_t = \dif{C}^{\mu}_t - \dif{L}_t, \quad X^{(x,{\mu}), L}_0 = x.
\]
The objective of the firm is to maximize its shareholders' value, defined as the expected present value of future dividends, computed under the risk-adjusted measure.\footnote{We assume that shareholders can diversify their portfolios and that the firm under study is small, so that its decisions do not alter the risk-adjusted measure.} Denote by $\muspace$ the domain on which $\mu_t$ resides. This domain is typically the whole real line, as for a Ornstein--Uhlenbeck process, a half-line, for a CIR process, or a bounded interval, for a bounded process. The value function is then defined as
\begin{equation}
    V(x, {\mu}) := \sup_{L} \mathbb{E} \left[ \int_0^{\theta^{(x,{\mu})}(L)} e^{-rt} \dif{L}_t \right]\!\!, \quad (x,{\mu}) \in \xmuspace := [0,\infty) \times \muspace,
	\label{eqn:dividendproblem}
\end{equation}
	where $L = \process{L}$ is required to be càdlàg, adapted to the filtration $\process{\mathcal{F}}$ generated by $C$ and $\mu$, as well as nondecreasing\footnote{The process $L$ must be nondecreasing because the limited liability of shareholders implies that $\dif{L}_t$ cannot be negative. Section \ref{sec:extensions} considers the case when new shares can be issued at a cost, allowing to inject new cash into the firm.} with $\Delta L_t \le X_{t-}$,\footnote{Shareholders cannot distribute more dividends than available cash reserves. Otherwise this would constitute fraudulent bankruptcy.}\footnote{Although the model allows dividend payments with infinite `frequency' of very general type, we argue that it is not less realistic than the absolutely continuous case where the `frequency' is also infinite, but interpreted as a rate. Indeed, as suggested by \eqref{eqn:dpe} in Section \ref{sec:dpe} and exploited in Section \ref{sec:numerics}, this model can be considered the limit when there is no bound on the dividend rate.} and
\[
	\theta^{(x,{\mu})}(L) := \inf \{ t > 0 : X^{(x,{\mu}),L}_t < 0 \}
\]
is the time of bankruptcy. In particular, we interpret a payout $\Delta L_t = X_{t-}$ as a decision to liquidate the firm.

In this paper we prove that liquidation is optimal when the profitability falls below a certain level; that, by stochastic methods, the value function is continuous and is a viscosity solution of the dynamic programming equation (DPE); that the DPE satisfies the comparison principle; and finally we provide a numerical scheme and extensive numerical results. Thereafter, we describe a related deterministic problem whose solution turns out to share most of the structural properties, thus providing intuition for the original problem. Finally, we conduct numerical studies of some model extensions.

When the starting points $(x,\mu)$ of the cash reserves and the cash flow are clear from context, the superscripts will be dropped in order to simplify the notation. Similar omissions of superscripts will be done for the bankruptcy times and strategies $L$ when it is clear what dividend policy is followed.

\section{Main results}
\label{sec:technical_statements}
The state space can be divided into regions, each characterized by its role in the DPE---interpreted as a control action. An illustration of these regions is presented in Figure \ref{fig:regions}.
The value function is characterized by three main regions: the dividend region, retain earnings region, and the liquidation region. The region of retained earnings is bounded by two curves and is characterized by $\dif{L}_t = 0$. The dividend region and liquidation region are both characterized by $\dif{L} \neq 0$, but correspond to different interpretations, and intersect at the threshold $\muthresh$. More precisely, in the liquidation region, all available cash reserves are `paid', leading to a liquidation. This is in contrast to the dividend region, where only the excess of the \emph{dividend target} is paid to the shareholders.
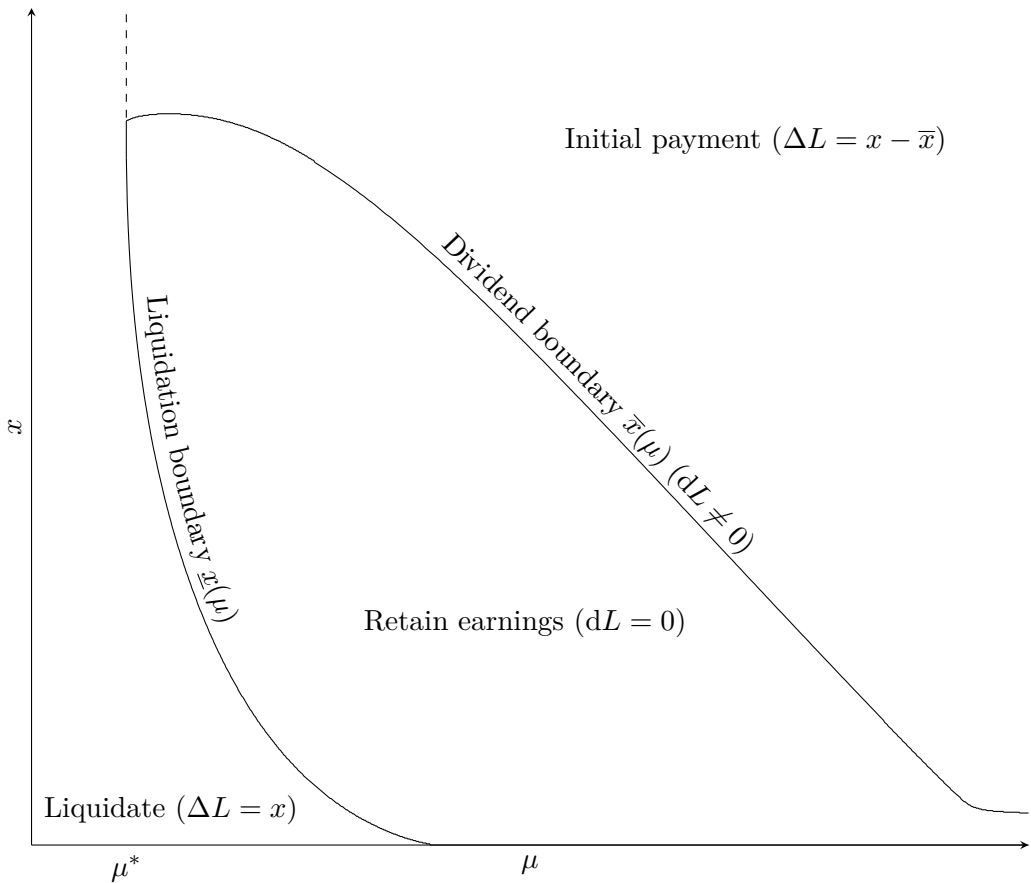
\begin{figure}
    \centering
	\begin{tikzpicture}
	    \begin{axis}[name=axis, xlabel=$\mu$, ylabel=$x$, width=\textwidth,  no markers, ticks=none, axis x line=bottom, axis y line=left, enlarge x limits={abs value=0.2,lower}, enlarge y limits={abs value=0.3,upper}]
		\addplot[name path=divLower, postaction={decorate,decoration={text along path,transform={yshift=0.5em},text align={left indent={0.1\dimexpr\pgfdecoratedpathlength\relax}},text={Liquidation boundary {$\underline{x}(\mu)$}}}}] table [x index=0, y index=1, col sep=comma] {Figures/xmusigma010030smooth.csv} node[pos=0.0, inner sep=0pt] (muminLow) {};
		\addplot[name path=divUpper, postaction={decorate,decoration={text along path,transform={yshift=0.4em},text align={left indent={0.3\dimexpr\pgfdecoratedpathlength\relax}},text={Dividend boundary {$\overline{x}(\mu)$} ({$\dif{L} \neq 0$})}}}] table [x index=0, y index=2, col sep=comma] {Figures/xmusigma010030smooth.csv} node[pos=0.0, inner sep=0pt] (muminHigh) {};
		\draw let \p1 = (muminLow), \p2 = (muminHigh) in (\x1,\y1) -- (\x1, \y2);
		\draw[dashed] let \p1 = (muminHigh) in (\x1,\y1) ++(0cm,5cm) -- (\x1,\y1);
		\node[below left] at (0.3,0.7) {Retain earnings ($\dif{L} = 0$)};
		\node[left, rotate=0] at (-0.52,0.10) {Liquidate ($\Delta L = x$)};
		\node[right] at (0,2) {Initial payment ($\Delta L = x - \overline{x}$)};
	    \end{axis}
	    \path let \p1 = (muminLow) in (\x1,\y1)-- (\x1,0cm) node[below] {$\muthresh$};
	\end{tikzpicture}
	\caption{\label{fig:regions}The figure shows the three regimes. In the region between the lines $\underline{x}$ and $\overline{x}$ the all incoming profits are retained. When the cash reserves fall to $\underline{x}$, the firm liquidates, whereas when it increases to $\overline{x}$, dividends are paid out according to the local time at the boundary, thus reflecting the cash reserves process. In the region above $\overline{x}$, a lump sum of the excess of $\overline{x}$ is paid immediately. Finally, when $\mu \leq \muthresh$, liquidation is optimal at all cash levels.}
\end{figure}

The remainder of this section is devoted to the statements of the main results. The proof of these results are given in Section \ref{sec:technical_proofs}. For the proofs we need the following set of assumptions, satisfied by for example Ornstein--Uhlenbeck and CIR processes. This assumption essentially says that the profitability process is mean-reverting and well-behaved in the sense of a Feller condition. Mean-reversion of profitability is empirically well established by \citet{fama_forecasting_2000} and others.

\begin{assumption}
    \label{ass:regularity}
    Throughout, we assume that the domain of $\process{\mu}$ is some possibly unbounded interval $\muspace$. Moreover, we assume that $\mudrift$ and $\musigma^2$ are locally Lipschitz continuous on the interior $\interior{\muspace}$, that $-\mu/\mudrift$ is non-negative and bounded for large (positive) $\mu$, that $-\mudrift/\mu$ is non-negative and bounded for large $-\mu$, as well as that $\musigma^2 \in \bigO(\mu)$ and never vanishes in $\interior{\muspace}$. Finally, we also assume either of the following:
    \begin{enumerate}
	\item The function $\musigma^2$ is also locally Lipschitz on the boundary $\bdry\muspace$.
	\item For any sufficiently small $\eta > 0$, we assume that for $\nu = \inf \muspace$,
	    \[  \limsup_{\mu \to \nu} \left( \frac{1}{\mu - \nu} - \frac{2 \mudrift(\mu) + \eta \rho \xsigma \musigma(\mu)}{\musigma(\mu)^2} \right) < \infty, \]
	    and for $\nu = \sup \muspace$
	    \[  \liminf_{\mu \to \nu} \left( \frac{1}{\mu - \nu} - \frac{2 \mudrift(\mu) + \eta \rho \xsigma \musigma(\mu)}{\musigma(\mu)^2} \right) > - \infty. \]
    \end{enumerate}
\end{assumption}

The economic interpretation of the growth conditions on $\mudrift$ is that even if the profitability is very large, it eventually returns to a more reasonable level. The growth condition on $\musigma$ simply ensures that the diffusion does not overpower this effect. Finally, the $\limsup$ and $\liminf$ conditions at the boundary points are needed to ensure that the profitability process behaves well enough close to the boundary.

\subsection{Liquidation threshold}
\begin{restatable}{theorem}{liquidationThreshold}
	\label{thm:liquidationThreshold}
	If $\muspace$ has no lower bound, there exists a value $\muthresh$ such that it is optimal to liquidate immediately whenever $\mu \leq \muthresh$, i.e.\ $V(x, \mu) \equiv x$.
\end{restatable}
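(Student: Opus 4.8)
The idea is to compare the dividend value $V$ against the real option value mentioned in the introduction, and to exploit the fact that immediate liquidation always yields exactly $x$. Since any admissible strategy satisfies $\Delta L_t \le X_{t-}$ and $L$ is nondecreasing, we always have $V(x,\mu) \ge x$ (pay everything at once). The entire content of the theorem is therefore the reverse inequality $V(x,\mu) \le x$ for $\mu$ sufficiently negative, i.e.\ that no strategy can beat immediate liquidation. The plan is to bound $V$ from above by the expected discounted value of the \emph{unconstrained} cash flow — the real option value $\rovalue$ — and then show that when $\mu$ is very negative, profitability is so poor that even the unconstrained firm is worth less than its current cash, i.e.\ $\rovalue(x,\mu) \le x$, forcing $V(x,\mu) = x$.

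The steps, in order. First, I would establish the a priori bound $V(x,\mu) \le x + \rovalue(x,\mu)$, or more simply $V(x,\mu) \le \sup_L \E\big[\int_0^\infty e^{-rt}\,\dif{L}_t\big]$ without the bankruptcy constraint; integration by parts on $\int_0^T e^{-rt}\,\dif{L}_t = e^{-rT}L_T + r\int_0^T e^{-rt}L_t\,\dif{t}$ together with $L_t \le x + \int_0^t \mu^\mu_s\,\dif{s} + \xsigma W_t$ (the cash that has ever entered the firm) reduces everything to controlling $\E\big[\int_0^\infty e^{-rt}\big(x + \int_0^t \mu^\mu_s\,\dif{s} + \xsigma W_t\big)^+\,\dif{t}\big]$ or, after another integration by parts, $x + \E\big[\int_0^\infty e^{-rt}\mu^\mu_t\,\dif{t}\big]$ up to the discount factor. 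Second — the crux — I would use Assumption \ref{ass:regularity}, specifically the mean-reversion condition $-\mudrift/\mu$ bounded and non-negative for large $-\mu$, to show that $\E[\mu^\mu_t]$ stays below a level that decays, or at least that $\int_0^\infty e^{-rt}\,\E[\mu^\mu_t]\,\dif{t} \to -\infty$ as $\mu \to -\infty$. Concretely, the growth condition gives $\mudrift(\mu) \le -c\,\mu$ for $\mu \le -M$ for some $c > 0$, so $\frac{d}{dt}\E[\mu^\mu_t] = \E[\mudrift(\mu^\mu_t)]$ and a comparison/Gronwall argument yields $\E[\mu^\mu_t] \le \mu e^{-ct} + K$ for $t$ before the process has had time to return to moderate levels; integrating against $e^{-rt}$ then produces a bound of the form $\frac{\mu}{r+c} + \frac{K}{r}$, which tends to $-\infty$ linearly in $\mu$. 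Third, combining, $V(x,\mu) - x \le \frac{1}{r}\big(\text{const} + \frac{\mu}{r+c}\big) < 0$ once $\mu \le \muthresh$ for a suitable threshold, and since $V(x,\mu) \ge x$ always, we get equality; crucially the bound is uniform in $x$ because the $x$-dependence dropped out as an additive $x$, so a single $\muthresh$ works for all cash levels.

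The main obstacle I anticipate is making the comparison argument for $\E[\mu^\mu_t]$ rigorous when $\mudrift$ is only locally Lipschitz and the bound $\mudrift(\mu) \le -c\mu$ holds only for large $-\mu$: one must handle the excursions of $\mu^\mu$ away from the far-negative region without losing the linear-in-$\mu$ gain, and one must ensure the needed moment ($\E|\mu^\mu_t| < \infty$ and its time-integrability against $e^{-rt}$) actually exists, which is where the $\musigma^2 \in \bigO(\mu)$ hypothesis and the Feller-type boundary conditions of Assumption \ref{ass:regularity} enter — they guarantee non-explosion and the requisite integrability. A clean way around the excursion issue is to split at the first hitting time $\tau_M$ of the level $-M$: on $[0,\tau_M]$ use the clean exponential bound, and on $[\tau_M,\infty)$ simply bound the contribution by the real option value started from $-M$, which is a finite constant independent of the original $\mu$; since $e^{-r\tau_M} \le 1$ and $\tau_M \to \infty$ in probability as $\mu \to -\infty$, the first piece still dominates and drives the whole expression to $-\infty$.
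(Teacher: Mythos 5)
Your overall strategy---bound $V$ by the value of a relaxed problem and show the relaxation is worth no more than $x$ for very negative $\mu$---is the right one, and your drift estimate in step 2 does appear in the paper. But there is a genuine gap at the heart of step 1/step 3: the reduction to $x + \E\bigl[\int_0^\infty e^{-rt}\mu_t\,\dif{t}\bigr]$ is not a valid upper bound for $V$. Admissible dividend streams stop at the ruin (or liquidation) time $\theta(L)$, so the correct bound coming from $L_t \le x + C_t$ on $[0,\theta(L)]$ and integration by parts is $V(x,\mu) \le x + \sup_L \E\bigl[\int_0^{\theta(L)} e^{-rt}\mu_t\,\dif{t}\bigr] \le x + \sup_\tau \E\bigl[\int_0^{\tau} e^{-rt}\mu_t\,\dif{t}\bigr] = x + \rovalue(\mu)$, an \emph{optimal stopping} value which is always $\ge 0$ (take $\tau=0$). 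Replacing the stopped integral by the infinite-horizon one goes the wrong way exactly in the regime you care about: truncating at $\theta$ removes the negative tail, so $\E\int_0^\theta e^{-rt}\mu_t\,\dif{t} \ge \E\int_0^\infty e^{-rt}\mu_t\,\dif{t}$ when $\mu$ is very negative. (Your extension of $L_t\le x+C_t$ past ruin also forces positive parts or a running maximum, which again cannot be dropped in an upper bound.) The clearest symptom is your own conclusion $V(x,\mu)-x \le \tfrac{\mu}{r+c}+\mathrm{const} < 0$: since $V(x,\mu)\ge x$ always, no correct chain of inequalities can end there; the conclusion one is after is the equality $\rovalue(\mu)=0$, not a strictly negative bound.

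What is missing is the optimal stopping analysis that the paper supplies via its real-option lemma. There, the bound $V\le x+\rovalue(\mu)$ is established exactly as above, and then one must show the stopping region $A_0=\{\mu:\rovalue(\mu)=0\}$ is nonempty and contains a half-line $(-\infty,\muthresh]$. Your step-2 estimate is used precisely at that point, but only inside a contradiction argument: if $A_0$ were empty, the stopping value would coincide with the never-stop value, which by the bound $\E[\mu_t]\le \mu + tC(1+H(t))$ (the integrability coming from Lemma \ref{lem:maxIntegrability}, after replacing $\mudrift$ by a linearly mean-reverting dominating drift) satisfies $\rovalue(\mu)\le \mu/r + C_3 \to -\infty$ as $\mu\to-\infty$, contradicting $\rovalue\ge 0$. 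Monotonicity of $\rovalue$ in $\mu$ then gives the threshold $\muthresh=\sup A_0$, and $V(x,\mu)\le x+\rovalue(\mu)=x\le V(x,\mu)$ for $\mu\le\muthresh$. So to repair your proof, keep your integration by parts up to $\theta(L)$ only, recognize the resulting bound as a stopping problem, and convert your Gronwall estimate into the nonemptiness-by-contradiction step rather than a direct bound on $V-x$.
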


\subsection{Continuity}
Although the problem formulation bears resemblance to the Merton consumption problem, which has been extensively studied in the mathematical finance literature, the crucial difference here is that the firm is always exposed to the risk of its own operations. In other words, there is no entirely safe asset, and, as a result, the problem lacks desired concavity properties. More specifically, this happens due to the possible quasi-convexity of $L \mapsto \theta(L)$. Figure \ref{fig:nonconvex} shows a case where $\theta\left(\frac{L^1 + L^2}{2}\right) < \max \{ \theta(L^1), \theta(L^2) \}$ for two strategies $L^1$ and $L^2$, which means that the convex combination of strategies in some scenario would pay out dividends after bankruptcy. Note that this loss of concavity corresponds to nonconvexity of the set of dividend processes that are constant after ruin.

\begin{figure}
    \centering
    \newcommand{\brownianmotion}[5]{
\draw[name path=brownianmotion, cap=round, #5] (#2)
\foreach \x in {1,...,#1}
{   -- ++(#3,rand*#4) coordinate (current point)
}
}

\begin{tikzpicture}[scale = 1.0, cap=round]
	\draw[->] (0,0) -- (10,0) node [right] {$t$};
	\draw[->] (0, 0) -- (0, 4.5) node [midway, above, sloped] {Cash and dividends};
\pgfmathsetseed{666}
	\brownianmotion{100}{0,3}{0.03}{0.1}{black};
	\node [above] at (1.7,3.15) {$x + C_t(\omega)$};
	\coordinate (firstruin) at (current point);
	\path[name path=firstpayout] (0,0) -- (firstruin);
	\draw[name intersections={of=firstpayout and brownianmotion, by={intersect}}] (0,0) -- node[left] {$L^1_t\,$} (intersect) coordinate (firstbankrupt);
	\draw let \p1 = (firstbankrupt) in (\x1, 0.1) -- (\x1, -0.1) node[below] {$\theta(L^1)$};

    \pgfmathsetseed{66666}
    \brownianmotion{170}{current point}{0.03}{0.1}{black};
    \coordinate (secondbankrupt) at (current point);

    \newcommand{\timeofsecondpayout}{4.0}
    \path[name path=secondpayout] (\timeofsecondpayout,0) -- (secondbankrupt);
    \draw[name intersections={of=secondpayout and brownianmotion, by={secondbankrupt}}] (\timeofsecondpayout,0) -- (secondbankrupt) node[midway, above] {$L^2_t$};
    \draw let \p1 = (secondbankrupt) in (\x1, 0.1) -- (\x1, -0.1) node[below] {$\theta(L^2)$};

    \path let \p1 = (firstruin) in coordinate (halffirst) at ($(firstruin)!0.5!(\x1,0)$);
    \path let \p1 = (halffirst) in coordinate (middlesecondstart) at (\timeofsecondpayout, \y1);
    \path let \p1 = (middlesecondstart), \p2 = (secondbankrupt) in coordinate (middlesecond) at ($(0, \y1) + (\x2,0)!0.5!(secondbankrupt)$);
    \path let \p1 = (middlesecond) in coordinate (middlefinal) at (10, \y1);
    \draw[dashed, name path=middlesecondincrease] let \p1 = (middlesecond) in (0,0) -- (halffirst) -- (middlesecondstart) -- (middlesecond) -- (middlefinal);
    \draw[name intersections={of=middlesecondincrease and brownianmotion, by={middleruin}}] let \p1 = (middleruin) in (\x1, 0.1) -- (\x1, -0.1) node[below] {$\theta(\bar L_t)$};

    \brownianmotion{30}{current point}{0.03}{0.1}{black};
    \draw let \p1 = (firstbankrupt) in (firstbankrupt) -- (10, \y1);
    \draw let \p1 = (secondbankrupt) in (secondbankrupt) -- (10, \y1);
\end{tikzpicture}
    \caption{\label{fig:nonconvex} Recall that ruin occurs when the dividends reach the total cash accumulated, i.e.\ $x + C_t$. The figure illustrates two dividend policies $L^1$ and $L^2$ (solid) as well as their convex combination $\bar{L}=(L^1+L^2)/2$ (dashed), showing that for some path $\theta(\bar{L}) < \theta(L^1) \vee \theta(L^2)$, which corresponds to the possibility of dividend payments after ruin. Equivalently, the set of dividend strategies which are constant after ruin is nonconvex.}
\end{figure}
This lack of concavity is the reason for our necessity to prove continuity, given by the following theorem:
\begin{restatable}{theorem}{continuity}
	\label{thm:continuity}
	The value function is continuous everywhere.
\end{restatable}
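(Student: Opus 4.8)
To prove continuity of $V$ on $\xmuspace = [0,\infty)\times\muspace$, I would split the argument into a lower-semicontinuity part and an upper-semicontinuity part, since the two require different ideas. The Lipschitz-type behavior in the $x$-variable is essentially free: for fixed $\mu$, the map $x\mapsto V(x,\mu)$ satisfies $V(x+h,\mu) \ge V(x,\mu)$ (a larger buffer can only help) and $V(x+h,\mu) \le V(x,\mu) + h$, because from initial reserves $x+h$ one can immediately pay out the lump sum $h$ (this is admissible since $\Delta L_0 = h \le x+h$) and then follow an optimal strategy for $(x,\mu)$; the extra $h$ is collected at time $0$ with discount factor $1$. Hence $V$ is $1$-Lipschitz and nondecreasing in $x$, and the only real work is continuity in $\mu$. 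The real option value $\rovalue$ mentioned in the introduction gives an a priori upper bound $V(x,\mu) \le x + \rovalue(\mu)$, which, together with Assumption \ref{ass:regularity}, should provide the growth control needed to make the localization arguments below go through.

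**Lower semicontinuity in $\mu$.** The natural approach is a coupling / stability argument for the SDEs. Fix a near-optimal strategy $L$ for the starting point $(x,\mu_0)$ achieving $V(x,\mu_0) - \eps$. For $\mu$ near $\mu_0$, run the \emph{same} Brownian motions $(W,\muW)$; by local Lipschitz continuity of $\mudrift$ and $\musigma^2$ (part 1 of the assumption) or the boundary estimate (part 2), the profitability paths $\mu^{\mu}$ and $\mu^{\mu_0}$ are close, uniformly on compact time intervals with high probability, and the driven cash reserves $X^{(x,\mu),L}$ converge to $X^{(x,\mu_0),L}$. The subtlety is the bankruptcy time: $\theta$ is only lower-semicontinuous in the path, so $\liminf \theta^{(x,\mu)}(L) \ge \theta^{(x,\mu_0)}(L)$ need not hold pathwise. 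To handle this, I would perturb $L$ slightly — e.g.\ replace $L$ by $(1-\delta)L$, or subtract a small initial lump sum — so that the perturbed reserves stay strictly above a positive level on $[0,\theta-\delta']$ and the process crosses zero transversally; then the bankruptcy times do converge for the perturbed strategy, Fatou/dominated convergence (using the bound $\rovalue$) gives $\liminf_{\mu\to\mu_0} V(x,\mu) \ge V(x,\mu_0) - \eps - (\text{error from }\delta)$, and letting $\eps,\delta\to 0$ finishes this half.

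**Upper semicontinuity in $\mu$.** This is where the loss of concavity illustrated in Figure \ref{fig:nonconvex} bites, so a convexity-based argument is unavailable and I expect this to be the main obstacle. Here I would argue by the dynamic programming principle, or rather establish continuity and the DPP together. The idea: for $\mu$ near $\mu_0$, take a near-optimal $L^{\mu}$ for $(x,\mu)$ and transplant it to the problem started at $(x,\mu_0)$. The difficulty is that $L^{\mu}$ is adapted to the filtration generated by the $\mu$-driven processes, and naively reusing it may over-distribute relative to the $\mu_0$-reserves, violating $\Delta L_t \le X_{t-}$ and risking dividends paid after ruin — exactly the pathology of Figure \ref{fig:nonconvex}. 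The fix is to \emph{truncate}: define $\tilde L_t = L^{\mu}_{t\wedge\tau}$ where $\tau = \theta^{(x,\mu_0)}(L^{\mu})$ is the $\mu_0$-bankruptcy time, which is admissible for $(x,\mu_0)$ by construction, and then control the lost dividends $\E[\int_\tau^{\theta^{(x,\mu)}} e^{-rt}\,\dif L^{\mu}_t]$. By the coupling estimate the two reserve processes are close, so on the event that $\mu$ is close to $\mu_0$ uniformly up to a large horizon, the discrepancy $X^{(x,\mu)}-X^{(x,\mu_0)}$ is small and the "extra" dividends are supported on a set of small probability (or small in expectation), again using $V(x,\mu)\le x+\rovalue(\mu)$ and the growth conditions to discard the far-horizon and large-$\mu$ contributions. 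Combining both semicontinuities yields continuity of $V$ in $\mu$, and together with the $1$-Lipschitz bound in $x$ we get joint continuity on $\xmuspace$; some care at the boundary $\{x=0\}$ and at $\bdry\muspace$ is needed, and this is precisely where the two alternatives in Assumption \ref{ass:regularity} are used to guarantee the profitability process does not leave $\muspace$ badly.
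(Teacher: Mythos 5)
Your transplant-and-truncate idea for the $\mu$-direction is in the same spirit as the paper's proof, but two of your load-bearing steps do not hold as stated. First, the claimed $1$-Lipschitz bound in $x$ is false, and the argument you give actually proves the \emph{reverse} inequality: paying the lump $h$ from reserves $x+h$ and then following an optimal strategy for $(x,\mu)$ shows $V(x+h,\mu) \geq V(x,\mu) + h$, not $\leq$. Indeed the DPE forces $V_x \geq 1$ everywhere, with strict inequality in the retain-earnings region (cash inside the firm is worth more than cash paid out, because of bankruptcy risk), so $V$ cannot be $1$-Lipschitz in $x$. Consequently continuity in $x$ is \emph{not} free, and your route to joint continuity (continuity in $\mu$ plus Lipschitz in $x$) collapses; the upper bound on increments in $x$ needs the same transplant-and-truncate machinery as the $\mu$-direction. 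The paper instead works with sequences monotone in both coordinates and uses monotonicity of $V$ in $(x,\mu)$ to sandwich arbitrary sequences.

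Second, your control of the lost dividends after truncation, $\E[\int_\tau^{\theta^{(x,\mu)}(L^\mu)} e^{-rt}\dif{L}^\mu_t]$ with $\tau = \theta^{(x,\mu_0)}(L^\mu)$, does not go through as sketched: the event $\{\tau < \theta^{(x,\mu)}(L^\mu)\}$ is typically \emph{not} of small probability (it occurs essentially whenever ruin occurs), and the bound $V \leq x + \rovalue(\mu)$ is useless here because $\rovalue(\mu_\tau)$ does not vanish when the reserves are small. What makes the estimate work is that at the truncation time the \emph{other} reserve process is close to $0$, so one needs (i) a conditional dynamic-programming-type inequality bounding the continuation payoff by $e^{-r\tau}\bar{V}(X_\tau,\mu_\tau)$ --- the weak inequality \eqref{eqn:easyWDPP}, stated with the upper semicontinuous envelope precisely because measurability of $V$ is not yet known; (ii) continuity of $V$ at $x=0$, locally uniformly in $\mu$, which is itself a nontrivial separate result (Theorem \ref{thm:contatzero}, proved using $\xsigma>0$ so that the uncontrolled process started near $0$ is ruined quickly), together with a bound on $\mu_\tau$ along bounded subsequences of truncation times; and (iii) a uniform $L^1$-domination of the payoffs (Lemma \ref{lem:dominatedPayoff}, resting on the maximal estimate of Lemma \ref{lem:maxIntegrability}) to justify passing to the limit, with the divergent subsequences killed by discounting. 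Your lower-semicontinuity half (withholding a small amount of dividends to stabilize the ruin time, localizing in time, and using domination) is workable in outline, but without repairing the two points above the proof of the theorem is incomplete.
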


\subsection{Dynamic programming equation}
\label{sec:dpe}
Following the continuity of Theorem \ref{thm:continuity}, we refer to \cite{fleming2006controlled} for proving the dynamic programming principle. For a general proof of dynamic programming, we refer to \cite{karoui2013capacitiesI,karoui2013capacitiesII}. Writing
\[  \mathcal{L}V = \mu V_x + \kappa(\mu) V_\mu + \Tr \Sigma(\mu) D^2 V, \]
where
\[\Sigma = \begin{bmatrix}
    \xsigma^2 & \rho \xsigma \musigma \\
    \rho \xsigma \musigma & \musigma^2
\end{bmatrix}\]
is the covariance matrix, the dynamic programming equation corresponding to \eqref{eqn:dividendproblem} is given by
\begin{equation} \label{eqn:dpe}
    \min \{ rV - \mathcal{L}V, V_x - 1 \} = 0, \quad \text{ in } \mathbb{R}_{> 0} \times \muspace,
\end{equation}
with $V(0, \cdot) \equiv 0$.

\begin{restatable}[Comparison]{theorem}{comparison}
    Let $u$ and $v$ be upper and lower semicontinuous, polynomially growing viscosity sub- and supersolutions of \eqref{eqn:dpe}. Then $u \leq v$ for $x = 0$ implies that $u \leq v$ everywhere in $\mathcal{O} := \mathbb{R}_{\geq 0} \times \muspace$.
    \label{thm:comparison}
\end{restatable}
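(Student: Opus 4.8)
The plan is to prove this comparison principle by the doubling-of-variables method. The equation is a degenerate Hamilton--Jacobi--Bellman equation on an unbounded domain with a one-sided gradient constraint, so there are three non-routine ingredients: the unbounded state space together with the possibly unbounded coefficients $\kappa$ and $\tilde{\sigma}$; the constraint $V_x\ge1$; and the degeneracy of $\Sigma$ together with the behaviour at $\partial\mathcal{M}$. Throughout $r>0$, and the operator is degenerate elliptic and proper. Suppose, toward a contradiction, that $M:=\sup_{\mathcal{O}}(u-v)>0$, and fix $z_0\in\mathcal{O}$ with $(u-v)(z_0)>M/2$. The first step is to build a Lyapunov/penalization function: using the mean-reversion clauses of Assumption~\ref{ass:regularity} (which force $\mu\kappa(\mu)$ to be negative and of order $-\mu^2$ for large $|\mu|$) together with $\tilde{\sigma}^2\in\mathcal{O}(\mu)$, one constructs a smooth $\psi\ge1$ on $\mathcal{O}$ with $\psi(x,\mu)\to\infty$ as $x+|\mu|\to\infty$ --- faster than the common polynomial growth of $u$ and $v$ --- with $\psi_x\ge0$ and $\psi_x>0$ on $\{x>0\}$, and with $r\psi-\mathcal{L}\psi\ge c_0>0$ on $\mathcal{O}$ (e.g.\ via a super-polynomial ansatz or a resolvent representation $\psi=(r-\mathcal{L})^{-1}f$ for a suitable $f\ge0$; checking the sign conditions is routine given the assumptions). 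Writing $u_\delta:=u-\delta\psi$ and $v_\delta:=v+\delta\psi$, a standard viscosity computation shows that $u_\delta$ is a subsolution of $\min\{rW-\mathcal{L}W+\delta c_0,\ W_x-1+\delta\psi_x\}\le0$ and $v_\delta$ a supersolution of $\min\{rW-\mathcal{L}W-\delta c_0,\ W_x-1-\delta\psi_x\}\ge0$, while $u_\delta\le v\le v_\delta$ on $\{x=0\}$ (as $\psi\ge0$) and $\sup_{\mathcal{O}}(u_\delta-v_\delta)>M/4>0$ for $\delta$ small. It suffices to contradict this.

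For $\epsilon>0$ I would maximize $\Phi_\epsilon(x,\mu,y,\nu)=u_\delta(x,\mu)-v_\delta(y,\nu)-\frac{1}{2\epsilon}(|x-y|^2+|\mu-\nu|^2)$ over $\mathcal{O}\times\mathcal{O}$. Since $u_\delta$ and $-v_\delta$ tend to $-\infty$ at infinity, the maximum is attained at some $(\hat x,\hat\mu,\hat y,\hat\nu)$ in a compact set independent of $\epsilon$, with value $\ge(u_\delta-v_\delta)(z_0)>M/4$; the usual estimates then give $\frac{1}{\epsilon}(|\hat x-\hat y|^2+|\hat\mu-\hat\nu|^2)\to0$ and, along a subsequence, $(\hat x,\hat\mu),(\hat y,\hat\nu)\to(\bar x,\bar\mu)$ with $u_\delta(\hat x,\hat\mu)-v_\delta(\hat y,\hat\nu)\to(u_\delta-v_\delta)(\bar x,\bar\mu)\ge M/4$. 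The boundary hypothesis $u\le v$ on $\{x=0\}$ together with upper/lower semicontinuity forces $\bar x\ne0$, hence $\hat x,\hat y>0$ for $\epsilon$ small. I would also need $\hat\mu,\hat\nu$ to stay away from $\partial\mathcal{M}$, where $\tilde{\sigma}^2$ may degenerate and the estimates below would fail: under Assumption~\ref{ass:regularity}(1) the coefficients are regular up to $\partial\mathcal{M}$ and one checks the estimates still go through, while under Assumption~\ref{ass:regularity}(2) I would add to the penalty a boundary barrier that blows up at $\partial\mathcal{M}$, e.g.\ $-\delta\log(\mu-\inf\mathcal{M})$ near the lower endpoint (and symmetrically at $\sup\mathcal{M}$), the $\limsup$/$\liminf$ conditions of the assumption being exactly what makes such a barrier a local strict supersolution there, so that it repels the maximizers into $\interior{\mathcal{M}}$.

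Then I would apply the Crandall--Ishii lemma to obtain symmetric $2\times2$ matrices $X,Y$ with $(p,X)\in\overline{J}^{\,2,+}u_\delta(\hat x,\hat\mu)$, $(p,Y)\in\overline{J}^{\,2,-}v_\delta(\hat y,\hat\nu)$, where $p=\frac{1}{\epsilon}(\hat x-\hat y,\hat\mu-\hat\nu)=:(p_1,p_2)$ is common to both jets, and $\left(\begin{smallmatrix}X&0\\0&-Y\end{smallmatrix}\right)\le\frac{3}{\epsilon}\left(\begin{smallmatrix}I&-I\\-I&I\end{smallmatrix}\right)$. The supersolution inequality for $v_\delta$ yields $p_1-1\ge\delta\psi_x(\hat y,\hat\nu)\ge0$, so the gradient entry in the subsolution inequality for $u_\delta$ equals $p_1-1+\delta\psi_x(\hat x,\hat\mu)\ge\delta\psi_x(\hat x,\hat\mu)>0$ because $\hat x>0$; hence that minimum is attained by the first entry, giving $ru_\delta(\hat x,\hat\mu)-\hat x p_1-\kappa(\hat\mu)p_2-\Tr\Sigma(\hat\mu)X+\delta c_0\le0$. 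Subtracting this from $rv_\delta(\hat y,\hat\nu)-\hat y p_1-\kappa(\hat\nu)p_2-\Tr\Sigma(\hat\nu)Y\ge\delta c_0$ and invoking the matrix inequality with a locally Lipschitz square root $A$ of $\Sigma$ (available because $\tilde{\sigma}^2$ is locally Lipschitz and strictly positive on $\interior{\mathcal{M}}$, hence so is $\tilde{\sigma}$), I obtain
\[
r\bigl(u_\delta(\hat x,\hat\mu)-v_\delta(\hat y,\hat\nu)\bigr)\le\frac{|\hat x-\hat y|^2}{\epsilon}+\frac{(L_\kappa+3L_A^2)\,|\hat\mu-\hat\nu|^2}{\epsilon}-2\delta c_0,
\]
with $L_\kappa,L_A$ local Lipschitz constants on the compact set carrying the maximizers. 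Letting $\epsilon\to0$ the first two terms vanish while the left side tends to something $\ge rM/4$, so $rM/4\le-2\delta c_0<0$, a contradiction. Hence $u\le v$ on $\mathcal{O}$.

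The main obstacle is the domain geometry, and it is precisely what Assumption~\ref{ass:regularity} is designed for: one must tame the unbounded coefficients at infinity through the Lyapunov function $\psi$ (using mean reversion and $\tilde{\sigma}^2\in\mathcal{O}(\mu)$), and one must keep the doubled maximizers off the degenerate boundary $\partial\mathcal{M}$ (handled by regularity up to $\partial\mathcal{M}$ under clause (1), or by the logarithmic barrier justified by the $\limsup$/$\liminf$ clause (2)); getting these Lyapunov and barrier sign conditions exactly right is where the real work lies. By contrast, the gradient constraint $V_x\ge1$ is disposed of cleanly, since the supersolution already forces $p_1\ge1$ in the doubled problem and building $\psi_x>0$ into the penalization makes the subsolution's constraint entry strictly positive, so the PDE branch is always the active one at an interior maximum.
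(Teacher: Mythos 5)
Your proposal is sound in outline and is, at bottom, a variant of the paper's own argument. Your Lyapunov function $\psi$ plays exactly the role of the exponential weight $e^{\eta x+\eta g(\mu)}$ of Lemmas \ref{lem:transformed_dpe}--\ref{lem:monotonicity}: the only $\psi$ that Assumption \ref{ass:regularity} readily supplies is this exponential (with $g'$ constant outside a compact set), and the paper uses it multiplicatively ($\tilde u=e^{-\eta x-\eta g(\mu)}u$, making $\tilde u,\tilde v$ bounded and the zero-order coefficient strictly positive), whereas you use it additively ($u-\delta\psi$, gaining coercivity and the margin $\delta c_0$); both devices serve the same purpose, and your claim that its construction is ``routine'' is fair but corresponds to real work (the paper's Lemma \ref{lem:monotonicity}). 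The genuinely different step is your treatment of the gradient constraint: since the Crandall--Ishii jets share the gradient $p$, the supersolution branch forces $p_1\ge 1+\delta\psi_x(\hat y,\hat\nu)$, and the built-in strictness $\psi_x>0$ makes the subsolution's constraint entry strictly positive, so the PDE branch is always active and no case distinction is needed. The paper instead keeps both cases and, in the constraint case, subtracts the two constraint inequalities of the transformed equation (where the zero-order term $\eta\tilde V$ appears). Your route is cleaner; the paper's does not require engineering $\psi_x>0$. (Minor slips: the first-order terms should be $\hat\mu p_1$ and $\hat\nu p_1$, not $\hat x p_1$ and $\hat y p_1$, since the drift of $X$ is $\mu$ --- harmless, as $(\hat\mu-\hat\nu)(\hat x-\hat y)/\epsilon\to0$; and at $\mu\to-\infty$ the assumption gives $\mudrift\ge0$ with at most linear growth, so $\mu\mudrift(\mu)\le 0$ but not necessarily of order $-\mu^2$, which is all your construction actually uses.)

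The one place where your sketch, as written, would not go through is the boundary $\bdry\muspace$. The naive barrier $b(\mu)=-\log(\mu-\inf\muspace)$ gives $\mudrift b'+\frac{\musigma^2}{2}b''=\frac{\musigma^2}{2(\mu-\nu)}\bigl(\frac{1}{\mu-\nu}-\frac{2\mudrift}{\musigma^2}\bigr)$, and Assumption \ref{ass:regularity}(2) only bounds the bracket above by a finite constant, not by zero; so $u-\delta' b$ need not remain a subsolution, and for CIR-type degeneracy the error does not vanish relative to your $O(\delta)$ margins. This is precisely why the paper uses the exponentially weighted primitive $h(\mu)=\int_\mu^{\hat\mu+\gamma}e^{C(\xi-\hat\mu)}(\xi-\hat\mu)^{-1}\dif{\xi}$, whose extra factor absorbs that constant and yields $f^\mu h'+\frac{\musigma^2}{2}h''\le0$ exactly; your barrier must be corrected in this way. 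Likewise, under clause (1) your claim that ``the estimates still go through'' is too quick when $\musigma^2$ is Lipschitz up to $\bdry\muspace$ but vanishes there (again CIR): then $\sqrt{\musigma^2}$ is only H\"older-$\frac12$ at the boundary, the Lipschitz-square-root bound in your trace estimate fails, and you still need to repel the doubled maximizers into $\interior{\muspace}$ (where $\musigma^2>0$ and locally Lipschitz) by a barrier or a refined Ishii estimate. You correctly identify these as the delicate points, but the specific fixes are where the proof actually lives.
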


\begin{corollary}[Uniqueness]
    \label{cor:uniqueness}
    The value function is the unique subexponentially growing \emph{viscosity solution} to the dynamic programming equation \eqref{eqn:dpe}.
\end{corollary}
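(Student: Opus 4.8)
The plan is to read the corollary off the comparison principle of Theorem~\ref{thm:comparison}, once it is known that the value function $V$ is itself a viscosity solution of \eqref{eqn:dpe} in the relevant growth class. That $V$ solves \eqref{eqn:dpe} is obtained as indicated in Section~\ref{sec:dpe}: by Theorem~\ref{thm:continuity} it is continuous, hence in particular both upper and lower semicontinuous, and continuity is exactly what is needed to invoke the dynamic programming principle through the cited references; the standard argument then shows that $V$ is simultaneously a viscosity sub- and supersolution, with the boundary condition $V(0,\cdot)\equiv 0$ holding because a reserve process started at $x=0$ has $\theta=0$ almost surely. For the growth, liquidating immediately gives $V(x,\mu)\ge x$, while $V$ is dominated by the no-bankruptcy (real-option) value, which under Assumption~\ref{ass:regularity} grows at most linearly; hence $V$ is polynomially growing, and existence in the claimed class is settled.

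For uniqueness, let $U$ be any subexponentially growing viscosity solution of \eqref{eqn:dpe}; being a solution it is continuous, hence both upper and lower semicontinuous, it is at once a sub- and a supersolution, and it satisfies $U(0,\cdot)\equiv 0$. I would then apply Theorem~\ref{thm:comparison} twice. With $(u,v)=(U,V)$ the boundary hypothesis $U(0,\cdot)=0=V(0,\cdot)$ is met and the conclusion is $U\le V$ on $\xmuspace$; with $(u,v)=(V,U)$ it is $V\le U$. Together these give $U\equiv V$, which is the assertion.

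The single genuine point to settle --- and the step I expect to be the main obstacle --- is the mismatch of growth classes: Theorem~\ref{thm:comparison} is stated for \emph{polynomially} growing sub- and supersolutions, whereas the corollary allows the competitor $U$ to grow only \emph{subexponentially}. I would handle this by revisiting the proof of Theorem~\ref{thm:comparison}: there the polynomial bound enters only through the choice of the penalty that forces the doubled-variable functional to attain its maximum and kills the terms this penalty creates at infinity, and both of these survive if the penalty is instead chosen in the subexponential class, since the restoring growth conditions on $\kappa$ in Assumption~\ref{ass:regularity} together with $\musigma^2\in\bigO(\mu)$ keep such a barrier a strict supersolution outside a large compact set. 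Granting this relaxation, the two-sided application above goes through unchanged. An alternative, if one prefers not to touch that proof, is to first establish a priori that any subexponentially growing viscosity solution is in fact polynomially growing --- the supersolution part already forces $U(x,\mu)\ge x$, and a Phragm\'en--Lindel\"of-type argument for the operator $\mathcal{L}$, again using the mean reversion of $\kappa$, rules out super-polynomial growth from above --- after which Theorem~\ref{thm:comparison} applies directly. Either way the corollary reduces to the comparison principle, with the remaining bookkeeping routine.
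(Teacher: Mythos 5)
Your proposal is correct and takes essentially the same route as the paper's proof: $V$ is a viscosity solution of \eqref{eqn:dpe} via continuity and the dynamic programming principle, and uniqueness follows from applying Theorem \ref{thm:comparison} twice, with the roles of sub- and supersolution exchanged. The growth-class mismatch you flag (polynomial in Theorem \ref{thm:comparison} versus subexponential in the corollary) is a genuine wrinkle the paper passes over silently, and your first resolution is the right one: the exponential change of variables \eqref{eqn:transformation} in the comparison proof already turns subexponentially growing functions into bounded ones vanishing at infinity, so the comparison argument goes through unchanged in that wider class.
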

\begin{proof}
    By the dynamic programming principle, the value function $V$ is a solution to \eqref{eqn:dpe}. To obtain uniqueness, observe that, by Theorem \ref{thm:comparison}, $V$, being both a sub- and a supersolution, both dominates and is dominated by any other solution. In other words, it is equal to any other solution, and thus unique.
\end{proof}

Note that the importance of the comparison principle goes beyond the uniqueness of the solution; the principle underpins the stability property of viscosity solutions. The stability property, in turn, leads to convergence of numerical schemes to the (unique) solution \cite{barles1991convergence}.

\section{Numerical results}
\label{sec:numerics}
The numerical results presented in this section are all obtained through policy iteration. Policy iteration is an iterative technique where one chooses a policy/control, calculates the corresponding payoff function, then updates the policy where the payoff function suggests it is profitable, and finally iterates this procedure until convergence. That the scheme does indeed converge to the value function is supported by the comparison principle in Theorem \ref{thm:comparison} and the uniqueness result in Corollary \ref{cor:uniqueness}. 

The idea implemented here is to approximate the singularity with increasingly large controls which are absolutely continuous with respect to time. In particular, let $K > 0$ be any large constant and consider control variables $L_t = \int_0^t \ell(X^L_t) \dif{t}$, where $\ell(x) \in [0,K]$ is measurable. Then, the problem of optimizing over functions $\ell$ amounts to a penalization of the DPE \eqref{eqn:dpe} with penalization factor $K$.

To see that the limit of these problems gives the solution to the original problem, we begin by writing out the PDE for the approximation:
\begin{equation}
    \label{eqn:dpediscretized}
    \min_{\ell \in [0,K]} \left(rV^K - \mathcal{L}V^K\right) + \ell \left(V^K_x - 1\right) = 0.
\end{equation}
By dividing by $K$ and subtracting and adding equal terms, we reach
\[  \min_{\lambda \in [0,1]} (1 - \lambda)\frac{rV^K - \mathcal{L}V^K}{K} + \lambda \left(V^K_x - 1 + \frac{rV^K - \mathcal{L}V^K}{K} \right) = 0, \]
which is equivalent to
\[  \min \left\{ rV^K - \mathcal{L}V^K, V^K_x - 1 + \frac{rV^K - \mathcal{L}V^K}{K} \right\} = 0.  \]
Finally, letting $K \to \infty$ and using the stability property of viscosity solutions guaranteed by the comparison principle, we find that $V^K \to V$.

Thanks to this approximation of the state dynamics, it holds that in any given space discretization, the transition rate between states is bounded away from zero. This means that the continuous time Markov chain on the discretized space can be reduced to a discrete time Markov chain (cf., e.g., \cite{puterman1994markov}). Thus, after a suitable space discretization, the problem is solved using standard methods of policy iteration that are known to converge. The convergence to $V^K$ of the solution to the discretized problem can be also be shown with standard viscosity methods.

\newcommand{\Dc}{\mathcal{D}}%
\newcommand{\Lc}{\mathcal{L}_{\mathcal{D}}}%
Let $\Dc$ be a discretization of $\xmuspace$ consisting of $N$ points, and let $\Lc^\ell$ be a corresponding discretization of $rV^K - \mathcal{L}V^K - \ell V^K_x$ from \eqref{eqn:dpediscretized}. Then, starting with any control $\ell_0$, the policy iteration scheme with tolerance $\tau \geq 0$\footnote{Note that the policy iteration scheme halts even for $\tau = 0$.} is given by the following steps:\footnote{Since we solve in a bounded domain, some care has to be taken at the boundaries. However, thanks to the condition given on $\mudrift$, it is natural to impose a reflecting boundary along the $\mu$-directions, provided the domain is large enough. Moreover, the optimal policy will naturally reflect at the upper $x$-boundary, provided the domain is large enough to contain the no-dividend region. For these reasons, the precise choice of boundary condition is of relatively small importance, if $\Dc$ is chosen appropriately.}
\noindent\begin{minipage}{\textwidth}
\rule{\textwidth}{1pt}
\noindent\textbf{Policy iteration algorithm} (step $i$)\\
\rule{\textwidth}{1pt}
\begin{enumerate}[topsep=0pt]
	\item Compute $V_i \in \R^{N}$ such that
		\[
		    \sum_{(x', \mu') \in \Dc} \Lc^{\ell_i} (x,\mu, x',\mu') V_i(x', \mu') + \ell_i = 0, \quad \forall (x,\mu) \in \Dc.
		\] Halt if $\sup |V_{i} - V_{i-1}|\leq \tau$.

	    \item For each $(x,\mu) \in \Dc,$ compute $\ell_{i+1}(x,\mu)$ according to
		\[
		    \ell_{i+1}(x,\mu) \in \argmin_{\hat{\ell} \in [0,K]} \left( \sum_{(x',\mu') \in \Dc} \Lc^{\hat{\ell}} (x,\mu, x', \mu') V_i(x', \mu') + \hat{\ell} \right).
		\]
		\item Return to step (i).
\end{enumerate}
\vspace{-\parskip}
\rule{\textwidth}{1pt}
\end{minipage}

\begin{remark}
	In the algorithm above, the second step is computationally the most difficult. In general, there is no known structure to be used, so it often has to be solved by brute force. Luckily, however, the problems given for each $(x,\mu)$ are fully independent, and the step can thus be entirely parallelized. Thanks to this, the computational burden can be partly mitigated.
\end{remark}

\subsection{Results and comparative statics}

What follows is an account of an implementation of the scheme for the Ornstein--Uhlenbeck model
\[  \dif{\mu}_t = k(\mumean - \mu) \dif{t} + \musigma \dif{\muW}_t. \]
The resulting optimal strategies presented in Figure \ref{fig:basicmodel}, with $k = 0.5$, $\mumean = 0.15$, and $\musigma = 0.1$ (left) as well as $\musigma = 0.3$ (right). The other parameter choices are $\sigma = 0.1$, $\rho = 0$, and $r = 0.05$.
The white regions indicate dividend payments or liquidation, i.e., $V_x = 1$ and $\dif{L} > 0$, whereas the black regions indicate that the firm retains all its earnings, i.e., $rV - \mathcal{L}V = 0$ and $\dif{L} = 0$.
The figures show the optimal policy, from which the value function can be obtained by solving a linear system of equations.

\begin{figure}
    \centering
	\begin{tikzpicture}
	    \begin{axis}[name=axis, xlabel=$\mu$, width=0.5\textwidth, enlargelimits=false, enlarge x limits={abs value=0.05,lower}, enlarge y limits={abs value=0.05,upper}, no markers, legend pos=south east, legend cell align=left]
		\addplot[name path=divLower] table [x=mu, y=divLower, col sep=comma] {Figures/xmusigma010030.csv};
		\addplot[name path=divUpper] table [x=mu, y=divUpper, col sep=comma] {Figures/xmusigma010030.csv};
		\addplot[fill=black] fill between[of=divLower and divUpper];
	    \end{axis}
	    \node[draw=black, anchor=north east, below left=2mm] at (axis.north east) {%
		    \begin{tabular}{@{}r@{ }l@{}}
			\tikz{\draw[black] (0,0) rectangle (3mm, 3mm);}&Pay dividends \\
			    \tikz{\filldraw[draw=black, fill=black] (0,0) rectangle (3mm, 3mm);}&No dividends
			\end{tabular}};
	\end{tikzpicture}
	\begin{tikzpicture}
	    \begin{axis}[name=axis, xlabel=$\mu$, ylabel=$x$, width=0.5\textwidth, enlargelimits=false, enlarge x limits={abs value=0.05,lower}, enlarge y limits={abs value=0.05,upper}, no markers, legend pos=south east, legend cell align=left]
		\addplot[name path=divLower] table [x index=0, y index=1, col sep=comma] {Figures/musigma010.csv};
		\addplot[name path=divUpper] table [x index=0, y index=2, col sep=comma] {Figures/musigma010.csv};
		\addplot[fill=black] fill between[of=divLower and divUpper];
	    \end{axis}
	    \node[draw=black, anchor=north east, below left=2mm] at (axis.north east) {%
		    \begin{tabular}{@{}r@{ }l@{}}
			\tikz{\draw[black] (0,0) rectangle (3mm, 3mm);}&Pay dividends \\
			    \tikz{\filldraw[draw=black, fill=black] (0,0) rectangle (3mm, 3mm);}&No dividends
			\end{tabular}};
	\end{tikzpicture}
	\caption{\label{fig:basicmodel}The black region corresponds to $\dif{L} = 0$, whereas the white region corresponds to $\dif{L} > 0$. The latter case is interpreted as either dividend payments or liquidation, depending on the position in the state space, see Figure \ref{fig:regions}.}
\end{figure}
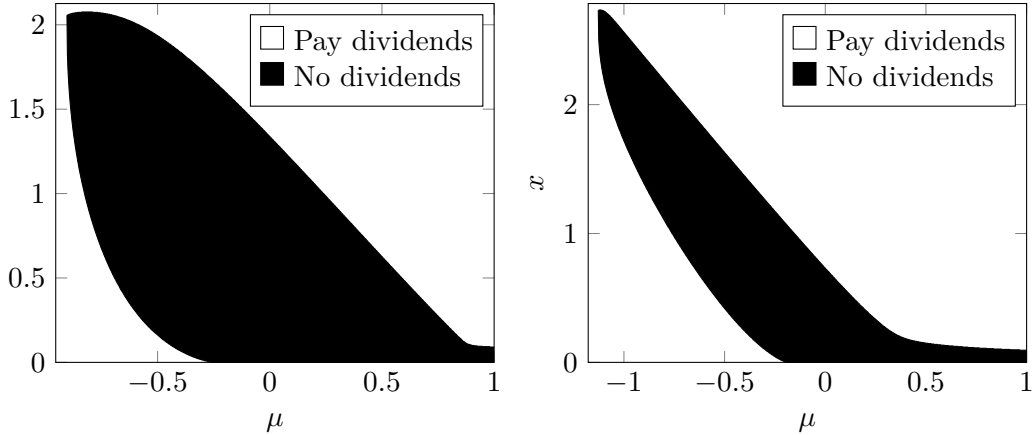

Figure \ref{fig:comparativestatics} shows the effect of changing one parameter at a time. The figures presented here have been generated for $\tau = 0$, without parallelization on an ultrabook laptop.\footnote{Intel Core i7-7600U, 1866MHz LPDDR3.} On this system, the computations took around 10 minutes\footnote{For the left solution in Figure \ref{fig:basicmodel} (same as `default' values from Figure \ref{fig:comparativestatics}), the runtime was just short of 8 minutes on a $1000 \times 1000$ grid without parallelization. A more modest grid size of $300 \times 300$ runs in 7 seconds on the same hardware. All computations were generated with a starting point with `no information', i.e., $\ell$ identically zero.} at this resolution, and were observed to converge.
Varying the parameters does not seem to change the qualitative properties significantly. The parameter $\musigma$ primarily changes the width of the band region; $k$ and $\mumean$ affect the size and extension into the region of negative $\mu$; $\xsigma$ changes the height; and finally $\rho$ influences the shape. Note that although the free boundary is nonmonotone in $\musigma$ for $x$ right below 2, it is monotone for smaller $x$.

\begin{figure}
    \centering
    \begin{tikzpicture}
	\begin{axis}[name=axis, xlabel=$\mu$, ylabel=$x$, width=0.5\textwidth, enlargelimits=false, xmax=1.0, enlarge x limits={abs value=0.05,lower}, enlarge y limits={abs value=0.05,upper}, no markers, legend pos=south east, legend cell align=left, font=\footnotesize]
	    \addplot[black] table [x index=0, y index=1, col sep=comma] {Figures/musigma010.csv} node[pos=0.6, right] {$\musigma = 0.1$};
	    \addplot[black] table [x index=0, y index=2, col sep=comma] {Figures/musigma010.csv};
	    \addplot[black] table [x index=0, y index=1, col sep=comma] {Figures/musigma020.csv};
	    \addplot[black] table [x index=0, y index=2, col sep=comma] {Figures/musigma020.csv};
	    \addplot[black] table [x index=0, y index=1, col sep=comma] {Figures/xmusigma010030.csv};
	    \addplot[black] table [x index=0, y index=2, col sep=comma] {Figures/xmusigma010030.csv};
	    \addplot[black] table [x index=0, y index=1, col sep=comma] {Figures/musigma040.csv};
	    \addplot[black] table [x index=0, y index=2, col sep=comma] {Figures/musigma040.csv};
	    \addplot[black] table [x index=0, y index=1, col sep=comma] {Figures/musigma050.csv} node[pos=0.4, left, inner sep=0.5em] {$\musigma = 0.5$};
	    \addplot[black] table [x index=0, y index=2, col sep=comma] {Figures/musigma050.csv};
        \end{axis}
    \end{tikzpicture}
    \begin{tikzpicture}
	\begin{axis}[name=axis, ytick={0,1,2,3}, xlabel=$\mu$, width=0.5\textwidth, enlargelimits=false, enlarge x limits={abs value=0.05,lower}, enlarge y limits={abs value=0.05,upper}, no markers, legend pos=south east, legend cell align=left, font=\footnotesize]
	    \addplot[black] table [x index=0, y index=1, col sep=comma] {Figures/xmusigma010030.csv} node[pos=0.55, left] {$\xsigma = 0.1$};
	    \addplot[black] table [x index=0, y index=2, col sep=comma] {Figures/xmusigma010030.csv};
	    \addplot[black] table [x index=0, y index=1, col sep=comma] {Figures/xsigma020.csv};
	    \addplot[black] table [x index=0, y index=2, col sep=comma] {Figures/xsigma020.csv};
	    \addplot[black] table [x index=0, y index=1, col sep=comma] {Figures/xsigma030.csv};
	    \addplot[black] table [x index=0, y index=2, col sep=comma] {Figures/xsigma030.csv};
	    \addplot[black] table [x index=0, y index=1, col sep=comma] {Figures/xsigma040.csv};
	    \addplot[black] table [x index=0, y index=2, col sep=comma] {Figures/xsigma040.csv} node[pos=0.6, above right] {$\xsigma = 0.4$};
        \end{axis}
    \end{tikzpicture}
	\begin{tikzpicture}
	    \begin{axis}[name=axis, xlabel=$\mu$, ylabel=$x$, width=0.5\textwidth, xmax=1.0, enlargelimits=false, enlarge x limits={abs value=0.1,lower}, enlarge y limits={abs value=0.1,upper}, no markers, legend pos=south east, legend cell align=left, font=\footnotesize]
		\addplot[black] table [x index=0, y index=1, col sep=comma] {Figures/xmusigma010030.csv} node[pos=0., above] {$\mumean = 0.15$};
		\addplot[black] table [x index=0, y index=2, col sep=comma] {Figures/xmusigma010030.csv};
		\addplot[black] table [x index=0, y index=1, col sep=comma] {Figures/mubar0.csv};
		\addplot[black] table [x index=0, y index=2, col sep=comma] {Figures/mubar0.csv} node[pos=0.1, above] {$\mumean = 0$};
		\addplot[black] table [x index=0, y index=1, col sep=comma] {Figures/mubar030.csv};
		\addplot[black] table [x index=0, y index=2, col sep=comma] {Figures/mubar030.csv} node[pos=0.25, above right] {$\mumean = 0.3$};
	    \end{axis}
	\end{tikzpicture}
	\begin{tikzpicture}
	    \begin{axis}[name=axis, xlabel=$\mu$, width=0.5\textwidth, xmax=1.2, enlargelimits=false, enlarge x limits={abs value=0.1,lower}, enlarge y limits={abs value=0.1,upper}, no markers, legend pos=south east, legend cell align=left, font=\footnotesize]
		\addplot[black] table [x index=0, y index=1, col sep=comma] {Figures/xmusigma010030.csv};
		\addplot[black] table [x index=0, y index=2, col sep=comma] {Figures/xmusigma010030.csv} node[pos=0.1, above right] {$k = 0.5$};
		\addplot[black] table [x index=0, y index=1, col sep=comma] {Figures/k025.csv} node[pos=0.30, right] {$k=0.25$};
		\addplot[black] table [x index=0, y index=2, col sep=comma] {Figures/k025.csv};
		\addplot[black] table [x index=0, y index=1, col sep=comma] {Figures/k10.csv} node[pos=0.5, below left] {$k=1.0$};
		\addplot[black] table [x index=0, y index=2, col sep=comma] {Figures/k10.csv};
	    \end{axis}
	\end{tikzpicture}
	\begin{tikzpicture}
	    \begin{axis}[name=axis, xlabel=$\mu$, ylabel=$x$, width=0.5\textwidth, xmax=1.0, enlargelimits=false, enlarge x limits={abs value=0.1,lower}, enlarge y limits={abs value=0.1,upper}, no markers, legend pos=south east, legend cell align=left, font=\footnotesize]
		\addplot[black] table [unbounded coords=jump, x index=0, y index=1, col sep=comma] {Figures/rho-10.csv};
		\addplot[black] table [x index=0, y index=2, col sep=comma] {Figures/rho-10.csv} node[pos=0.45, below left] {$\rho = -1$};
		\addplot[black] table [x index=0, y index=1, col sep=comma] {Figures/rho-05.csv};
		\addplot[black] table [x index=0, y index=2, col sep=comma] {Figures/rho-05.csv};
		\addplot[black] table [x index=0, y index=1, col sep=comma] {Figures/xmusigma010030.csv};
		\addplot[black] table [x index=0, y index=2, col sep=comma] {Figures/xmusigma010030.csv};
		\addplot[black] table [x index=0, y index=1, col sep=comma] {Figures/rho05.csv};
		\addplot[black] table [x index=0, y index=2, col sep=comma] {Figures/rho05.csv};
		\addplot[black] table [x index=0, y index=1, col sep=comma] {Figures/rho10.csv};
		\addplot[black] table [x index=0, y index=2, col sep=comma] {Figures/rho10.csv} node[pos=0.55, above right] {$\rho = 1$};
	    \end{axis}
	\end{tikzpicture}
	\caption{\label{fig:comparativestatics} Comparative statics. Apart from for the parameter being varied, the chosen values were $r = 0.05$, $k = 0.5$, $\mumean = 0.15$, $\musigma = 0.3$, $\xsigma = 0.1$, and $\rho = 0$. The parameter varied is indicated in the respective figure. The values considered were $k = 0.25, 0.5, 1.0$, $\mumean = 0.0, 0.15, 0.3$, $\musigma = 0.1,0.2,0.3,0.4,0.5$, $\xsigma = 0.1,0.2,0.3,0.4$, and $\rho = -1.0,-0.5,0.0,0.5,1.0$. To address the effect of the boundary conditions, most calculations were run on a larger domain than plotted here. The lower boundary for $\rho=-1.0$ displayed signs of numerical instability around the origin and has therefore not been plotted in this region.}
\end{figure}
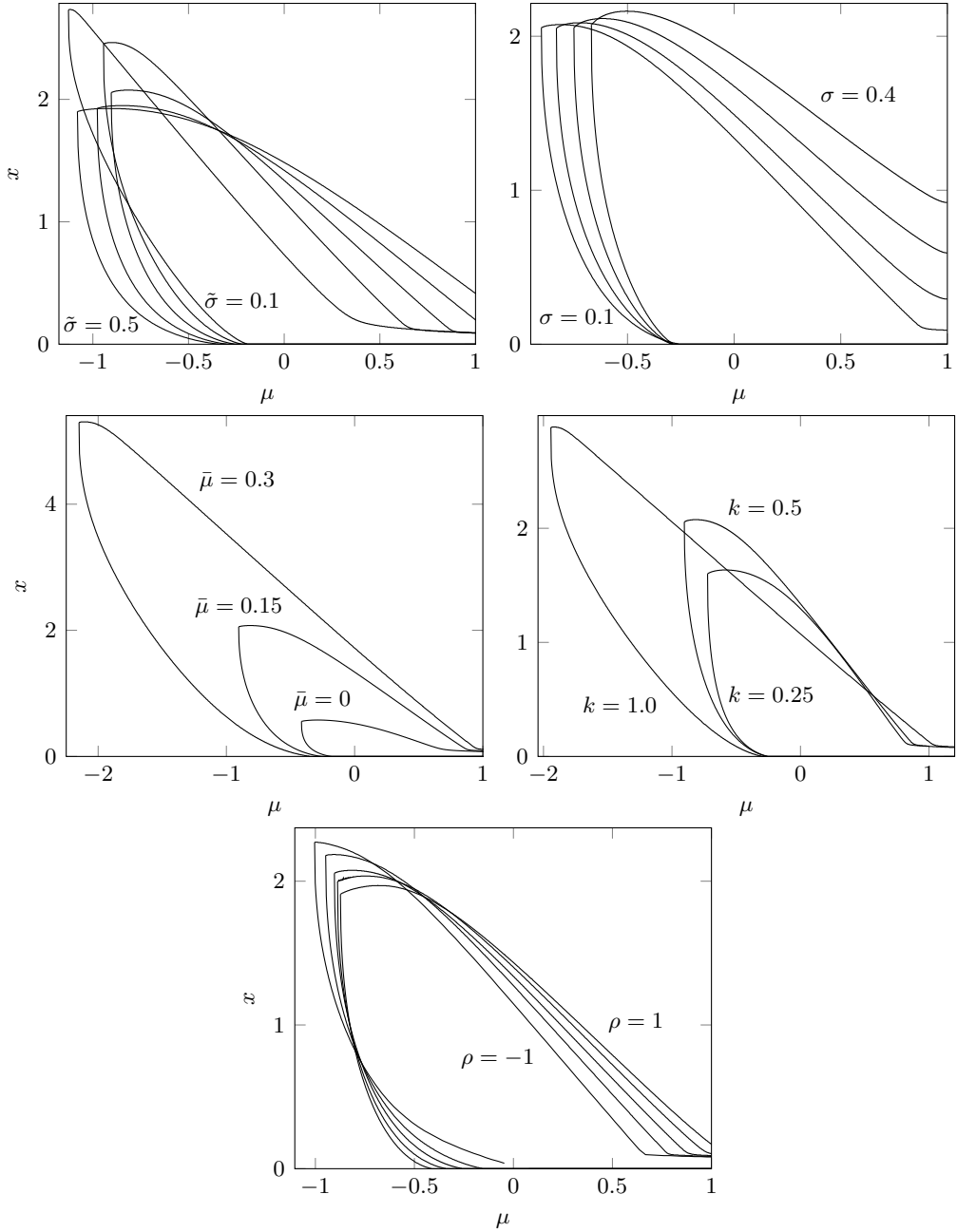

\section{A source of intuition: the deterministic problem}
\label{sec:deterministic}
WHen the two parameters $\sigma$ and $\tilde{\sigma}$ are zero, the problem can be solved explicitly for a mean-reverting $\mudrift(\mu) = k(\mumean - \mu)$, with $k,\mumean > 0$. The solution of this deterministic problem provides intuition for the solution of the stochastic problem.

Since $\process{\mu}$ is mean-reverting to the positive value $\mumean$, it will never attain negative values once it has been positive. In particular, solving the ODE describing the dynamics of $\process{\mu}$ yields
\[ \mu_t = \mumean + (\mu - \mumean) e^{-kt}.  \]
We will treat the cases $\mu \geq 0$ and $\mu < 0$ separately.

\begin{figure}
    \centering
    \begin{tikzpicture}
	\begin{axis}[enlargelimits=false, xlabel=$\mu$, legend pos=south west, width=\textwidth, height=7cm, xmin=-1.85, xmax = 0.65]
	    \addplot[name path=xb, restrict x to domain=-1.72:0] table {Figures/deterministic.txt};
	    \addlegendentry{$x_b(\mu)$}
	    \addplot[name path=futureincome, dashed, restrict x to domain=-1.72:0] table {Figures/deterministicfutureincome.txt};
	    \addlegendentry{$e^{-r\tau_0(\mu)}V(0,0)$}
	    \path[name intersections={of=xb and futureincome, by=cutoff}];
	    \path (cutoff) [dotted, draw=black] -- (cutoff|-0,7cm);
	    
	    \node[rotate=-32] at (-1.62, 1.8) {Liquidate};
	    \node at (-0.2, 2) {Pay excess of $x_b(\mu)$: $\displaystyle L_t = x-x_b(\mu) + \int_{\mathrlap{\tau_0(\mu)\wedge t}}^t \mu_s \dif{s}$};
	\end{axis}
    \end{tikzpicture}
    \caption{The value $x_b(\mu)$ is the cost of waiting for positive cash flow, whereas $e^{-r\tau_0(\mu)}V(0,0)$ is the present value of the future positive cash flow. For $\mu$ below the level at which these coincide, liquidation is thus optimal. Liquidation is also optimal when $x < x_b(\mu)$.}
\end{figure}
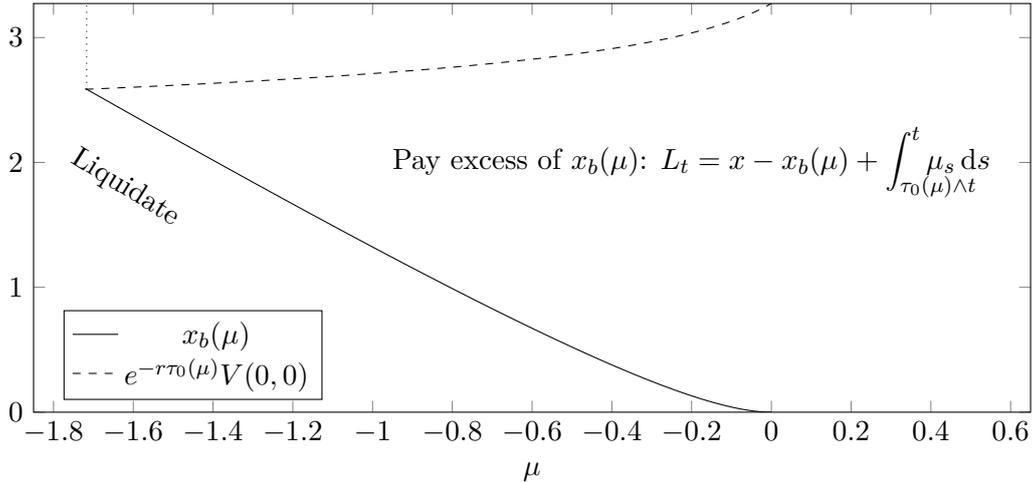

If $\mu \geq 0$, the firm is always profitable, and does not default at $x = 0$ unless $\mu < 0$. Therefore, there is no need for a cash buffer, so it is optimal to pay all the initial cash reserves immediately. Thereafter, cash from $\mu$ is paid out as it flows in. The value of paying the incoming earnings as dividends is
\begin{equation}
	\label{eqn:longruncashflow}
	\int_0^\infty e^{-r t} \mu_t \dif{t} = \int_0^\infty \mumean e^{-rt} + (\mu - \mumean) e^{-(k + r)t} \dif{t} = \frac{\mumean}{r} + \frac{\mu - \mumean}{r + k}.
\end{equation}
Hence, for $\mu \geq 0$, the value is given by
\[  V(x, \mu) = x + \frac{\mumean}{r} + \frac{\mu - \mumean}{r + k}. \]

On the other hand, if the cash flow starts at a negative level, it will eventually reach a positive state, but the question is whether the firm is able to absorb the cumulated losses before then. If it can, are those losses larger than future earnings?
More precisely, the company could face ruin before it sees positive earnings, but even if it does not, the losses incurred could offset the value of the future positive cash flows.
To address the first possibility, we calculate the minimum amount of cash needed to reach a positive cash flow before the time of ruin. Denote by $\tau_0$ the time such that $\mu_{\tau_0} = 0$. This time can be found explicitly:
\[
	\tau_0 = \tau_0(\mu) = \frac{\ln \left(\frac{\mumean}{\mumean - \mu} \right)}{-k}.
\]
The cumulative losses until a positive cash flow is reached are:
\[
	\int_0^{\tau_0(\mu)} \mu_t \dif{t} = \mumean \tau_0(\mu) + \frac{\mu - \mumean}{k} (1 - e^{-k \tau_0(\mu)}) = \mumean \tau_0(\mu) + \frac{\mu}{k}.
\]
Hence, the initial cash level needs to be at least this high to survive until $\mu \geq 0$, i.e.,
\[
	V(x, \mu) = x, \quad \text{if} \quad
	x < - \mumean \tau_0(\mu) - \frac{\mu}{k} =: x_b(\mu).
\]
At an initial cash level $x$ above $x_b(\mu)$, we identify two possible strategies: Either pay out dividends of size $x - x_b(\mu)$ and wait for $\process{\mu}$ to reach 0, or perform a liquidation by paying out $x$. Which strategy is optimal depends on the cost of waiting and the value of future cash flows. Hence, for $x \geq x_b(\mu)$,
\[
    V(x, \mu) = \max\{ x, x - x_b(\mu) + e^{-r \tau_0(\mu)}V(0,0)\} = x + \max\left\{ 0, e^{-r \tau_0(\mu)} V(0,0) - x_b(\mu) \right\}.
\]
Since $x_b$ and $\tau_0$ are both decreasing in $\mu$, there exists a $\mu^*$ such that $e^{-r\tau_0(\mu^*)} V(0,0) = x_b(\mu)$, so from the last term we see that if $\mu \leq \mu^*$, it is optimal to liquidate regardless of the cash level. In the model, this corresponds to paying all cash reserves as dividends at time $t=0$, yielding the value $V(x, \mu) = x$.

With $x_b(\mu) = 0$ for $\mu \geq 0$, we have proved the following result:
\begin{theorem}
    There exist thresholds $x_b(\mu)$ and $\mu^*$ such that
    \begin{itemize}
	\item it is optimal to liquidate immediately if $\mu \leq \mu^*$;
	\item it is optimal to liquidate immediately if $x < x_b(\mu)$;
	\item if $x \geq x_b(\mu)$ and $\mu \geq \mu^*$, it is optimal to immediately pay the excess $x - x_b(\mu)$ and thereafter all earnings as they arrive.
    \end{itemize}
\end{theorem}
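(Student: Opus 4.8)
The plan is to solve the deterministic dynamics explicitly and then split into the cases $\mu \ge 0$ and $\mu < 0$. Integrating $\dif\mu_t = k(\mumean - \mu_t)\,\dif t$ gives $\mu_t = \mumean + (\mu - \mumean)e^{-kt}$, which is monotone in $t$ and, since $\mumean > 0$, is nonnegative for all $t \ge \tau_0(\mu)$, where $\tau_0(\mu) = k^{-1}\ln\!\big((\mumean - \mu)/\mumean\big)$ is the unique hitting time of $0$ (and $\tau_0(\mu) = 0$ for $\mu \ge 0$). For $\mu \ge 0$ the rate $\mu_t$ never falls below $\min\{\mu, \mumean\} \ge 0$, so paying out $x$ at $t=0$ and thereafter distributing the incoming flow $\mu_t\,\dif t$ is admissible with value $x + \int_0^\infty e^{-rt}\mu_t\,\dif t = x + \mumean/r + (\mu - \mumean)/(r+k)$ by \eqref{eqn:longruncashflow}; a matching upper bound follows from the ``real option'' estimate that, for any admissible $L$, the undiscounted dividends paid by time $t$ cannot exceed the cumulated cash $A_t := x + \int_0^t\mu_s\,\dif s$, so an integration by parts (using that $A_t$ is here nondecreasing) bounds the discounted dividends by $x + \int_0^\infty e^{-rt}\mu_t\,\dif t$. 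This also shows $x_b(\mu) = 0$ for $\mu \ge 0$.

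For $\mu < 0$ I would first compute the cumulated loss $\int_0^{\tau_0(\mu)}\mu_t\,\dif t = \mumean\tau_0(\mu) + \mu/k =: -x_b(\mu) < 0$ and then treat $x < x_b(\mu)$ and $x \ge x_b(\mu)$ separately. When $x < x_b(\mu)$, the cumulated cash $A_t$ is continuous, strictly decreasing on $(0,\tau_0(\mu))$, starts at $x \ge 0$, and reaches $x - x_b(\mu) < 0$ at $\tau_0(\mu)$, so it vanishes at some $t^* < \tau_0(\mu)$; since $X_t \le A_t$ (because $L \ge 0$), the firm is ruined before $\tau_0(\mu)$ for every admissible $L$. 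On $[0,\theta]$ we then have $\mu_s \le 0$, so the total cash ever available is at most $x$ and, discounting only decreasing the value, $V(x,\mu) = x$, attained by immediate liquidation. When $x \ge x_b(\mu)$, I would exhibit two candidates — liquidate at once (value $x$), or pay the lump sum $x - x_b(\mu)$ at $t=0$, hold the buffer $x_b(\mu)$, which runs down deterministically to exactly $0$ at $\tau_0(\mu)$, then switch to the already-solved $\mu \ge 0$ regime (value $e^{-r\tau_0(\mu)}V(0,0)$ discounted to $t=0$) — giving $V(x,\mu) \ge x + \max\{0,\, e^{-r\tau_0(\mu)}V(0,0) - x_b(\mu)\}$.

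The reverse inequality in this last case is the crux. I would decompose an arbitrary admissible $L$ at $\tau_0(\mu)$: if $\theta \le \tau_0(\mu)$, the firm is ruined during the loss phase and, as above, the value is at most $x$; if $\theta > \tau_0(\mu)$, solvency at $\tau_0(\mu)$ forces $L_{\tau_0(\mu)} \le x - x_b(\mu)$, so the pre-$\tau_0$ dividends contribute at most $L_{\tau_0(\mu)}$, while by time-homogeneity the post-$\tau_0$ dividends contribute at most $e^{-r\tau_0(\mu)}V\big(X_{\tau_0(\mu)},0\big) = e^{-r\tau_0(\mu)}\big(X_{\tau_0(\mu)} + V(0,0)\big)$; substituting $X_{\tau_0(\mu)} = x - x_b(\mu) - L_{\tau_0(\mu)}$ and using $L_{\tau_0(\mu)}(1 - e^{-r\tau_0(\mu)}) \le (x - x_b(\mu))(1 - e^{-r\tau_0(\mu)})$ makes the total collapse to $x - x_b(\mu) + e^{-r\tau_0(\mu)}V(0,0)$. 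Combining the two alternatives yields $V(x,\mu) = x + \max\{0,\, e^{-r\tau_0(\mu)}V(0,0) - x_b(\mu)\}$.

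Finally, $\tau_0$ and $x_b$ are both strictly decreasing in $\mu$ on $(-\infty,0)$ (for $x_b$ the derivative equals $\mu/(k(\mumean - \mu)) < 0$), so $g(\mu) := e^{-r\tau_0(\mu)}V(0,0) - x_b(\mu)$ is strictly increasing there, with $g(\mu) \to -\infty$ as $\mu \to -\infty$ and $g(0^-) = V(0,0) > 0$; the intermediate value theorem then gives a unique $\mu^* < 0$ with $g(\mu^*) = 0$. For $\mu \le \mu^*$, $g \le 0$ and $V(x,\mu) = x$, i.e.\ immediate liquidation is optimal; for $\mu \ge \mu^*$ (and trivially for $\mu \ge 0$, where $x_b \equiv 0$) the second candidate is optimal, i.e.\ pay the excess $x - x_b(\mu)$ and then all earnings as they arrive; and for $x < x_b(\mu)$ liquidation is forced — exactly the three bullets. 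I expect the main difficulty to be making the decomposition-at-$\tau_0$ and the real-option upper bound fully rigorous for general càdlàg singular controls, in particular pinning down the state exactly at the ruin time and justifying the time-homogeneous restart at $\tau_0$.
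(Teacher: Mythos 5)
Your proposal is correct and follows essentially the same route as the paper's Section~\ref{sec:deterministic} derivation: explicit solution of the ODE, the hitting time $\tau_0(\mu)$, the buffer threshold $x_b(\mu)$, comparison of immediate liquidation against paying down to $x_b(\mu)$ and waiting, and monotonicity of $e^{-r\tau_0(\mu)}V(0,0)-x_b(\mu)$ to produce $\mu^*$. The only difference is that you make the optimality (upper-bound) step rigorous via the integration-by-parts bound $L_t\le x+\int_0^t\mu_s\,\dif{s}$ and the decomposition of an arbitrary admissible $L$ at $\tau_0(\mu)$, which the paper treats informally by simply identifying the two candidate strategies.
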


\section{Model extensions}
\label{sec:extensions}

In this section we present a few model extensions that we study numerically. The numerical method used is the same as in Section \ref{sec:numerics}, but in the case of fixed costs, a slight generalization of the approximation of controls is necessary, due to the non-local behavior, as mentioned below.

\subsection{Equity issuance}
A firm in need of liquidity could see itself issuing equity to outside investors. In the sequel, we assume that this happens whenever desired, but at a cost. We consider two costs: one cost $\lambda_p$ \emph{proportional} to the capital received and one \emph{fixed cost} $\lambda_f$ which is independent of the amount of equity issued. Mathematically, we follow the model in \cite{decamps2011free} and write
\[ \dif{X}_t = \mu_t \dif{t} + \sigma \dif{W}_t - \dif{L}_t + \dif{I}_t, \]
where $I = \process{I}$, just like $L$, is an adapted, increasing, RCLL control process. We allow for the costs to be $\mu$-dependent,\footnote{This reflects the fact that a more profitable company (higher $\mu$) typically has better access to financial markets.} and write $\lambda_p(\mu_t)$ and $\lambda_f(\mu_t)$. For emphasis, we keep this dependence explicit.

The figures presented in this section are generated with the Ornstein--Uhlenbeck model
\[  \dif{\mu}_t = k(\mumean - \mu) \dif{t} + \musigma \dif{\muW}_t \]
for $k = 0.5$, $\mumean = 0.15$, and $\musigma = 0.3$. The other parameter choices are $\sigma = 0.1$, $\rho = 0$, and $r = 0.05$.

\subsubsection{Proportional issuance costs}
If the costs are purely proportional, i.e., $\lambda_f = 0$, the payoff corresponding to any two controls $L$ and $I$ is
\[  J(x, \mu; L, I) = \E \left[ \int_0^{\theta(L,I)} e^{-rt} \left(\dif{L} - (1+\lambda_p(\mu_t))\dif{I}\right)_t \right],  \]
where $\theta(L, I)$ is the first time the process $X$ becomes negative. In this case, the DPE bears great resemblance to that of the original model, since issuance simply has the opposite effect of dividend payments:
\[  \min \{ rV - \mathcal{L}V, \quad V_x - 1, \quad 1 + \lambda_p(\mu) - V_x \} = 0. \]
The interpretation is that the state space consists of three different regions defined by the optimal action: pay dividends, issue equity, or doing neither. Equity is thus issued whenever $V_x(x,\mu) = 1 + \lambda_p(\mu)$. This means that issuance occurs whenever the marginal value is equal to the marginal cost.

Since issuance is costly and can be done at any time, it is optimal to only issue equity at points where ruin would otherwise be reached, i.e., where $x=0$. However, whether to do so at the boundary depends on the current profitability. Indeed, as seen in Figure \ref{fig:issuance}, equity is only issued when the profitability is above a certain level, below which we still see the band structure of the original problem.
\begin{figure}
    \centering
	\pgfplotsset{every tick label/.append style={font=\footnotesize}}
	\begin{tikzpicture}
	    \begin{axis}[name=axis, ylabel=$x$, xlabel=$\mu$, xlabel shift=-1em, width=0.5\textwidth, enlargelimits=false, enlarge x limits={abs value=0.05,lower}, enlarge y limits={abs value=0.05,upper}, no markers, legend pos=south east, legend cell align=left,
		extra x tick style={
			tick align=outside,
			tick pos=left,
			major tick length=1.0em,
			major tick style={densely dashed}
		    },
		extra x ticks={-1.0953},
		extra x tick labels={$\underline{i}$}]
		\addplot[name path=divLower] table [x=mu, y=divLower, col sep=comma] {Figures/proportional.csv};
		\addplot[name path=divUpper] table [x=mu, y=divUpper, col sep=comma] {Figures/proportional.csv};
		\addplot[fill=black] fill between[of=divLower and divUpper];
	    \end{axis}
	    \node[draw=black, anchor=north east, below left=2mm] at (axis.north east) {%
		    \small
		    \begin{tabular}{@{}r@{ }l@{}}
			\tikz{\draw[black] (0,0) rectangle (0.8em, 0.8em);}&Pay dividends \\
			    \tikz{\filldraw[draw=black, fill=black] (0,0) rectangle (0.8em, 0.8em);}&Retain earnings
			\end{tabular}};
	\end{tikzpicture}
	\begin{tikzpicture}
	    \begin{axis}[name=axis, xlabel=$\mu$, xlabel shift=-1em, width=0.5\textwidth, enlargelimits=false, enlarge x limits={abs value=0.05,lower}, enlarge y limits={abs value=0.05,upper}, no markers, legend pos=south east, legend cell align=left,
		extra x tick style={
			tick align=outside,
			tick pos=left,
			major tick length=1.0em,
			major tick style={densely dashed}
		    },
		extra x ticks={-0.7493},
		extra x tick labels={$\underline{i}$}]
		\addplot[name path=divLower] table [x=mu, y=divLower, col sep=comma] {Figures/fixed.csv};
		\addplot[name path=divUpper] table [x=mu, y=divUpper, col sep=comma] {Figures/fixed.csv};
		\addplot[fill=black] fill between[of=divLower and divUpper];
		\addplot[name path=issuanceTarget, white, dashed, thick] table [x=mu, y=issuanceTarget, col sep=comma] {Figures/fixed.csv} node[pos=0.1, inner sep=0pt] (targetArrow) {};
		\draw[->, white, thick] (targetArrow|-0,0) -- (targetArrow) node[midway, left] {$\Delta I$};
	    \end{axis}
	    \node[draw=black, anchor=north east, below left=2mm] at (axis.north east) {%
		    \small
	    \begin{tabular}{@{}r@{ }l@{}}
		\tikz{\draw[black] (0,0) rectangle (0.8em, 0.8em);}&Pay dividends \\
		\tikz{\filldraw[draw=black, fill=black] (0,0) rectangle (0.8em, 0.8em);}&Retain earnings \\
		\tikz{\filldraw[draw=black, fill=black] (0,0) rectangle (0.8em, 0.8em);\draw[dashed, white] (0em, 0.4em) -- (0.8em, 0.4em);} & Issuance target
	    \end{tabular}};
	\end{tikzpicture}
    \caption{\label{fig:issuance}In the left figure, proportional issuance costs start at 34 \% for low $\mu$ and decaying to 25 \%. Equity is issued according to local a time at the boundary $x=0$ where otherwise ruin would occur, and no equity is issued below $\underline{i} = -1.0953$. In the right figure fixed costs are present, starting at $0.14$ decaying to $0.06$ as a function of $\mu$. Again, equity is only issued at the boundary, and no equity is issued below $\underline{i} = -0.7493$. The dashed white line indicates the cash reserve level after issuance, i.e., how much equity was issued.}
\end{figure}
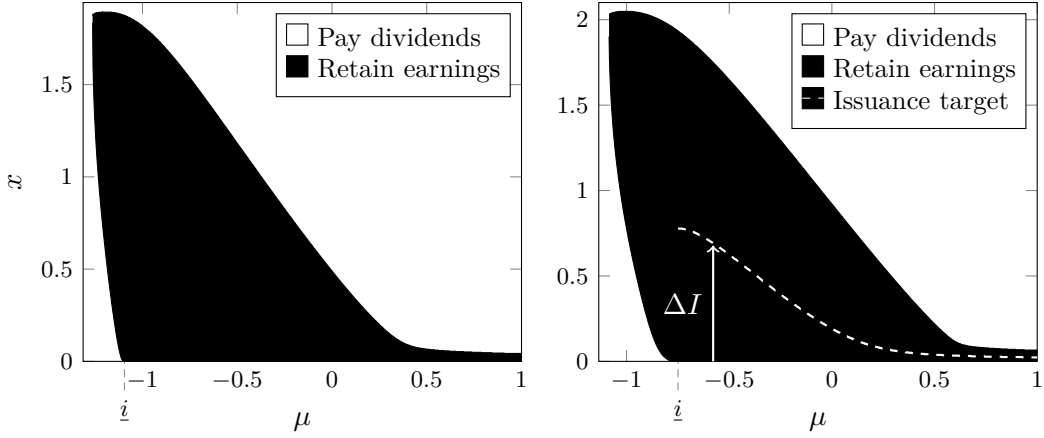

\subsubsection{Fixed issuance costs}
On the other hand, if the fixed cost is nonzero, we can assume, without loss of generality, that
\[  I_t = \sum_{k = 1}^\infty i_k \1_{\{t \geq \tau_k\}}, \]
for some strictly increasing sequence of stopping times $\tau_k$ and positive $\mathcal{F}_{\tau_k}$-measurable random variables $i_k$.
The stopping times are interpreted as issuance dates, and the random variables as the issued equity.
If $I_t$ could not be written in this form, it would imply infinite issuance frequency, which would, because of the fixed cost, come at infinite cost. This form is therefore a natural restriction, and the corresponding payoff functional is given by
\[  J(x, \mu; L, I) = \E \left[ \int_0^{\theta(L,I)} e^{-rt} \dif{L}_t - \sum_{k=1}^\infty e^{-r \tau_k} (\lambda_f(\mu_t) + \lambda_p(\mu_t) i_k) \1_{\{\tau_k < \theta(L, I)\}} \right]. \]
The value function given by this problem is then the solution to the following nonlocal DPE:
\[  \min \left\{ rV - \mathcal{L}V, \quad V_x -1, \quad V(x, \mu) - \sup_{i \geq 0} \left(V(x+i, \mu) - \lambda_p i - \lambda_f\right) \right\} = 0. \]
The last conditions states that the value at any given point is at least equal to the value in any point after issuance less the issuance costs.

Just like for proportional costs, issuance optimally only occurs at the boundary. However, with fixed costs, the amount of equity issued is now larger in order to avoid incurring another fixed cost soon in the future. The magnitude is presented as the issuance target in Figure \ref{fig:issuance}. Note that, the numerical method employed in the fixed cost case can be interpreted as the issuance structure in \cite{hugonnier2015capital} where the arrival of investors might not coincide with the desire for equity issuance. The approximation parameter $K$ from the singular case will here describe the arrival rate of investors, and the value $V$ is the limit when the investor arrival rate tends to infinity. This same approximation scheme is presented for another problem in \cite{altarovici2017optimal} and turns out to be very accurate.

By letting the fixed cost grow sufficiently fast in $-\mu$, one can obtain substantially different issuance policies. As shown in Figure \ref{fig:wave}, such structure can have a wave-like shape, not dissimilar to the shape of the continuity region. This seems to indicate two factors at play: Either one issues equity as a last resort at $x=0$, or at an earlier time in fear of higher issuance costs in the future. Moreover, in this regime, the target points no longer constitute a continuous line, but instead has a jump discontinuity to the right of the gray region.

\begin{figure}[h]
    \centering
\begin{tikzpicture}
	\pgfplotsset{every axis/.append style={axis on top,
			width=\textwidth,
			enlargelimits=false,
			scaled ticks=false,
			tick label style={/pgf/number format/fixed},
		}
	}
	\begin{axis}[name=axisborder, xlabel=$\mu$, ylabel=$x$]
		\addplot graphics[xmin=-1.0, ymin=0, xmax=1.0, ymax=2.15] {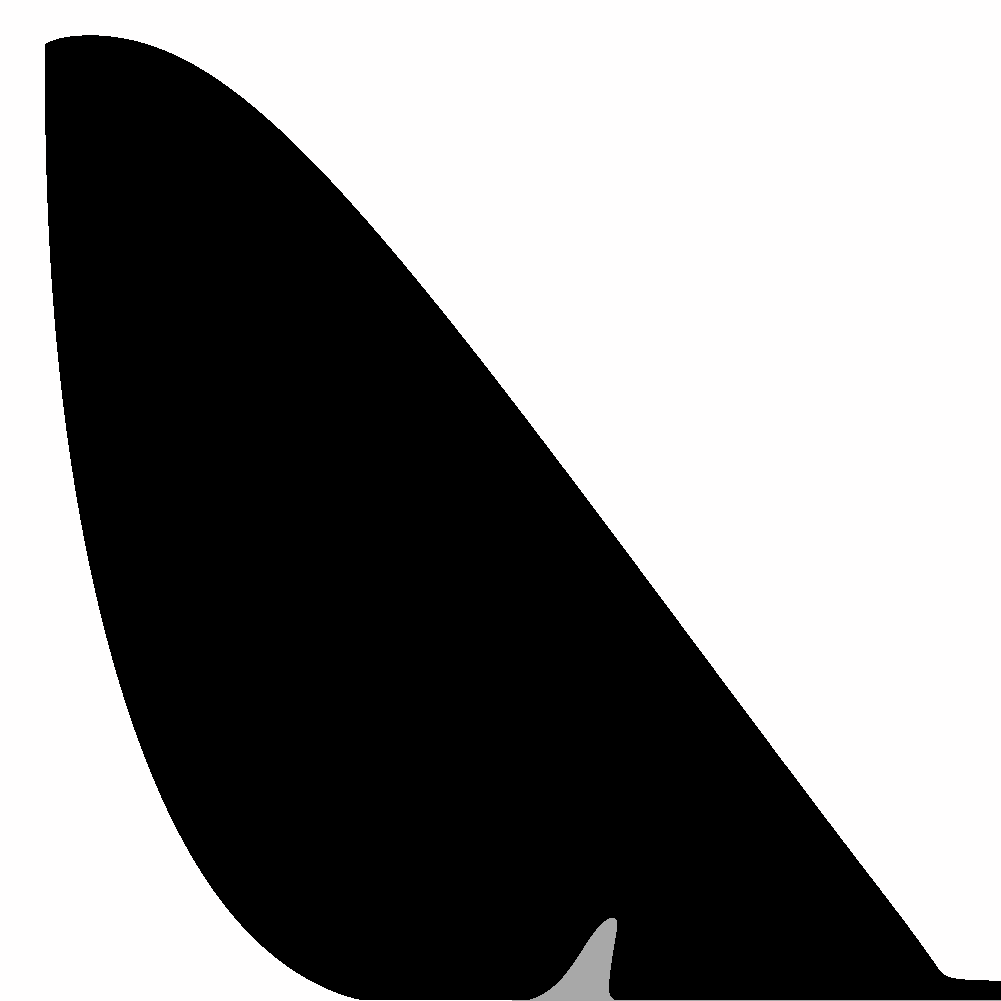};
		\addplot[unbounded coords=jump,name path=issuanceTarget, white, dashed, thick] table [x=mu, y=issuanceTarget, col sep=comma] {Figures/wavetarget.csv};
	\end{axis}
		\node[draw=black, anchor=north east, below left=2mm] at (axisborder.north east) {%
			\begin{tabular}{@{}r@{ }l@{}}
				\tikz{\draw[black] (0,0) rectangle (3mm, 3mm);}&Pay dividends \\
				\tikz{\filldraw[draw=black, fill=black] (0,0) rectangle (3mm, 3mm);}&Retain earnings \\
				\tikz{\filldraw[draw=black, fill=lightgray] (0,0) rectangle (3mm, 3mm);}&Issue equity \\
				\tikz{\filldraw[draw=black, fill=black] (0,0) rectangle (0.8em, 0.8em);\draw[dashed, white] (0em, 0.4em) -- (0.8em, 0.4em);} & Issuance target
			\end{tabular}};
\end{tikzpicture}
\caption{\label{fig:wave}When issuance costs are sufficiently high for negative $\mu$, it is optimal to issue equity away from the boundary $x=0$.}
\end{figure}

\subsection{Credit lines}
If the firm is sufficiently profitable ($\mu$ large enough), it could be granted a credit line by a bank, whereby the cash reserves $X = \process{X}$ are allowed to be negative up to a certain threshold $\underline{x}(\mu)$, below which bankruptcy occurs. Suppose the interest on this credit line is $\rho_- \geq 0$ and define
\[  \rho(x) = \begin{cases}
	0,\quad & x \geq 0 \\
	\rho_-, \quad & x < 0
    \end{cases}. \]
The cash balance (reserves) process can then be written as
\[  \dif{X}_t = (\rho(X_t)X_t + \mu_t) \dif{t} + \sigma \dif{W}_t - \dif{L}_t. \]
Although the credit line has the effect of shifting the dividend region downwards, closer to the new bankruptcy boundary, as seen in Figure \ref{fig:creditline}, the effect of the interest rates $\rho$ seems to be minimal. Indeed, with interest rates of order 1 \% and optimal cash levels of order 1, the effecting increase in drift is then also of order 1 \%. This effect is therefore generally small in comparison to the magnitude of the profitability.
\begin{figure}[h]
    \centering
\begin{tikzpicture}
	\pgfplotsset{every axis/.append style={axis on top,
			width=0.6\textwidth,
			enlargelimits=false,
			scaled ticks=false,
			tick label style={/pgf/number format/fixed},
		}
	}
	\begin{axis}[name=axisborder, xlabel=$\mu$, ylabel=$x$]
		\addplot graphics[xmin=-1.1, ymin=-0.9499999998597367, xmax=1.2219, ymax=2.9] {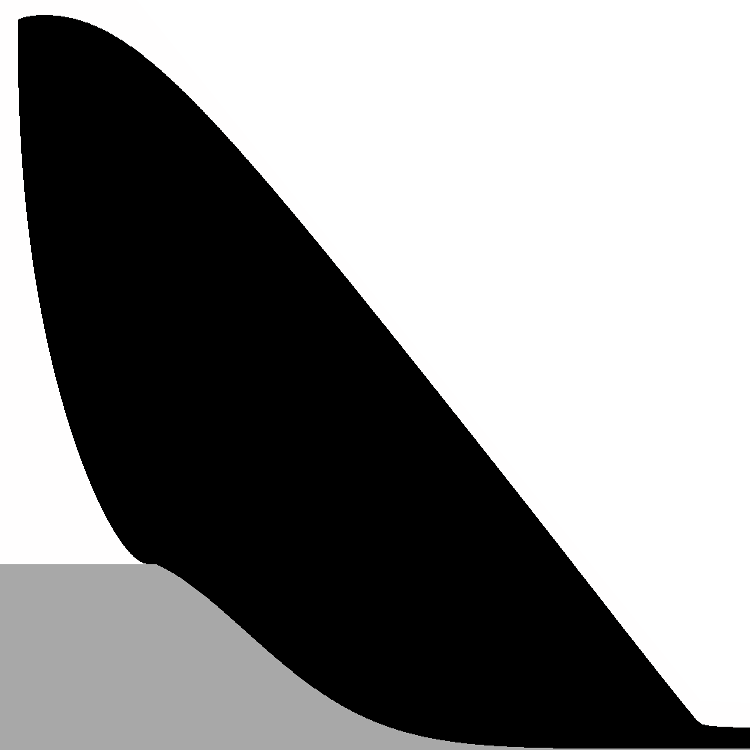};
	\end{axis}
		\node[draw=black, anchor=north east, below left=2mm] at (axisborder.north east) {%
			\begin{tabular}{@{}r@{ }l@{}}
				\tikz{\draw[black] (0,0) rectangle (3mm, 3mm);}&Pay dividends \\
				\tikz{\filldraw[draw=black, fill=black] (0,0) rectangle (3mm, 3mm);}&No dividends \\
				\tikz{\filldraw[draw=black, fill=lightgray] (0,0) rectangle (3mm, 3mm);}&Ruin \\
			\end{tabular}};
\end{tikzpicture}
\caption{\label{fig:creditline}The credit line shifts the upper boundary downwards, but the main characteristics of the model are otherwise retained.}
\end{figure}

\section{Concluding comments}
\label{sec:conclusion}
In this paper, we have studied an optimal dividend problem where the profitability of the firm is stochastic.
In this context, we have given a comparison result for general dynamics as well as outlined a numerical algorithm to compute the solution.
The convergence of the numerical method is a consequence of the comparison result.

The numerical solution indicates that the problem exhibits both barrier and band structure, depending on the current profitability.
The band structure is especially interesting, since it is unusual for Brownian dynamics.
We also prove that when the profitability falls below a certain threshold, liquidation becomes optimal.
Although mean reversion indicates that the profitability of the company will improve over time, that is not enough;
when the initial profitability is too low, the expected cumulative losses are too large to be covered by the future income, so immediate closure is optimal.

The flexibility of the numerical method from Section \ref{sec:numerics} allows us to numerically study a number of extensions to our original model.
The extended models allow for issuance with proportional and/or fixed costs as well as the possibility of credit lines.
Finally, one extension provides a way to concavify the problem (in the $x$-dimension) by giving the firm the opportunity of entering fair bets.

\section{Proofs}
\label{sec:technical_proofs}
This section is dedicated to the proofs of previous sections.
We begin with a result that is needed in multiple proofs. It is proven under slightly stronger assumptions, which turn out to be satisfied without loss of generality where the lemma is needed.
\begin{lemma}
    \label{lem:maxIntegrability}
    If, in addition to Assumption \ref{ass:regularity}, $\inf \muspace > -\infty$ or $\mu \in \bigO(\mudrift(\mu))$ as $\mu \to -\infty$, there exists a sublinearly growing function $H$ and a constant $C$ such that, for any stopping time $\tau$,
    \[  \E \left[ \max_{0\leq t \leq \tau} |\mu^0_t| \right] \leq C \E [H(\tau)]. \]
\end{lemma}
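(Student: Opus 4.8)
The goal is a bound on $\E[\max_{0\le t\le\tau}|\mu_t^0|]$ by $C\,\E[H(\tau)]$ for some sublinear $H$, where $\mu^0$ is the profitability process started at $\mu=0$. The plan is to split the estimate according to whether the process is far out in the tails, where the drift $\mudrift$ actively pushes it back, or stays in a bounded region, where a crude bound suffices. Concretely, I would fix a large level $m$ so that outside $[-m,m]$ the assumptions give $-\mu/\mudrift(\mu)$ bounded (for $\mu$ large positive) and $-\mudrift(\mu)/\mu$ bounded (for $\mu$ large negative), so that there is a constant $c>0$ with $\mu\,\mudrift(\mu)\le -c\,\mu^2$ (equivalently $\mudrift(\mu)\operatorname{sgn}(\mu)\le -c|\mu|$) for $|\mu|\ge m$; the extra hypothesis $\inf\muspace>-\infty$ or $\mu\in\bigO(\mudrift(\mu))$ as $\mu\to-\infty$ is exactly what makes the negative tail behave symmetrically to the positive one. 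Likewise $\musigma^2\in\bigO(\mu)$ gives $\musigma(\mu)^2\le A(1+|\mu|)$.

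Next I would apply Itô's formula to a suitable Lyapunov function, e.g. $\phi(\mu)=\sqrt{1+\mu^2}$ or a smoothed version of $|\mu|$, obtaining
\[
    \phi(\mu_t^0) = \phi(0) + \int_0^t \mathcal{A}\phi(\mu_s^0)\,\dif s + \int_0^t \phi'(\mu_s^0)\musigma(\mu_s^0)\,\dif\muW_s,
\]
where $\mathcal{A}\phi = \mudrift\,\phi' + \tfrac12\musigma^2\phi''$. On $\{|\mu|\ge m\}$ the drift term $\mudrift\phi'$ is $\le -c|\mu|\,\phi'(\mu)\le -c'$ for some $c'>0$ once $m$ is large (since $\phi'(\mu)\to\pm1$), while $\tfrac12\musigma^2\phi''$ is bounded because $\phi''$ decays like $|\mu|^{-3}$ and $\musigma^2$ grows at most linearly; on the compact set $\{|\mu|\le m\}$ everything is bounded by local Lipschitz continuity. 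Hence $\mathcal{A}\phi\le B$ globally for a constant $B$, and after taking $\sup_{t\le\tau}$ and then expectations, the drift integral contributes at most $B\,\E[\tau]$ and the martingale part is controlled by the Burkholder–Davis–Gundy inequality:
\[
    \E\Bigl[\sup_{t\le\tau}\Bigl|\int_0^t\phi'(\mu_s^0)\musigma(\mu_s^0)\,\dif\muW_s\Bigr|\Bigr]
    \le c_{BDG}\,\E\Bigl[\Bigl(\int_0^\tau \phi'(\mu_s^0)^2\musigma(\mu_s^0)^2\,\dif s\Bigr)^{1/2}\Bigr].
\]
Since $\phi'$ is bounded and $\musigma^2\le A(1+|\mu|)$, the integrand is $\le A(1+\sup_{s\le\tau}|\mu_s^0|)$, so the quadratic variation is $\le A\tau(1+\sup_{s\le\tau}|\mu_s^0|)$; I would then use $\sqrt{ab}\le \eps a + \tfrac{1}{4\eps}b$ to split $\E[(\tau\sup|\mu|)^{1/2}]$ and absorb the $\eps\,\E[\sup_{t\le\tau}|\mu_t^0|]$ term into the left-hand side, choosing $\eps$ small. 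Using $\phi(\mu)\ge|\mu|$ (up to an additive constant) this yields $\E[\sup_{t\le\tau}|\mu_t^0|]\le C(1+\E[\tau]+\E[\sqrt{\tau}])$, so one may take $H(s)=1+s+\sqrt{s}$, which is sublinear in the sense needed (it is $o(s)$ only for large $s$; more precisely one records the explicit form $H(s)=C'(1+s)$ or keeps the $\sqrt\tau$ term, whichever the later applications of the lemma require — a square-root upper bound in fact follows if one works with $\phi^2$ instead and takes square roots at the end).

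The main obstacle is handling the negative tail and the boundary: the hypotheses on $\mudrift$ are stated asymmetrically ("$-\mu/\mudrift$ bounded for large positive $\mu$", "$-\mudrift/\mu$ bounded for large $-\mu$"), and near a finite left endpoint $\nu=\inf\muspace$ the diffusion $\musigma$ need not be Lipschitz up to the boundary, so the Lyapunov computation must be done carefully — either the term $\mu\in\bigO(\mudrift(\mu))$ as $\mu\to-\infty$ (when $\muspace$ is unbounded below) or the finiteness of $\inf\muspace$ (when it is bounded below) is used to guarantee that the drift dominates on the relevant tail and that $\sup_{t\le\tau}|\mu_t^0|$ cannot blow up faster than controlled by $\tau$. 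I would also need to justify that the stochastic integral is a genuine martingale (or localize along a sequence $\tau_n\uparrow\tau\wedge$exit-times and pass to the limit by monotone convergence) so that BDG applies and the local-martingale term has no drift contribution; the localization argument, combined with Fatou, is routine but must be spelled out since $\mu^0$ a priori has only the integrability we are trying to establish.
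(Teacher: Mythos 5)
There is a genuine gap: your argument cannot deliver the \emph{sublinearly} growing $H$ that the lemma asserts, and that sublinearity is the real content of the statement. Your tail reduction is fine ($\sign(\mu)\mudrift(\mu) \le -c\abs{\mu}$ outside a compact set, using $\inf\muspace > -\infty$ or $\mu \in \bigO(\mudrift(\mu))$), but the Lyapunov/BDG step then throws the mean reversion away: once you bound $\mathcal{A}\phi \le B$ and absorb the BDG term, the best you get is $\E[\sup_{t\le\tau}\abs{\mu^0_t}] \le C(1 + \E[\tau] + \E[\sqrt{\tau}\,])$, i.e.\ $H(s) = 1+s+\sqrt{s}$, which grows linearly, not sublinearly. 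Your proposed repair (run the argument for $\phi^2$ and take square roots at the end) does not rescue this: it produces a bound of the form $(\E[\cdots])^{1/2}$, and Jensen gives $(\E[\tau])^{1/2} \ge \E[\sqrt{\tau}\,]$, so the outcome is not of the required form $C\,\E[H(\tau)]$ with a fixed function $H$ inside the expectation (and must hold uniformly over all stopping times); moreover the quadratic variation of the martingale part of $\mu^2$ involves $\abs{\mu}^3$, which after Young's inequality drags in $\E[\tau^2]$-type terms. More fundamentally, any estimate that only uses an upper bound on the generator applied to a Lyapunov function cannot detect that the running maximum of a strongly mean-reverting diffusion grows only like a power of $\log t$, which is exactly what the lemma encodes.

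The paper's proof goes through Peskir's maximal inequality instead: with scale density $S'$ and speed measure $m$ it sets $F(\mu) = \int_c^\mu m((c,\nu])\,S'(\nu)\dif{\nu}$, notes that the linear restoring drift makes $S'$ (hence $F$) grow like $\exp(a\mu^\gamma)$ with $\gamma \ge 1$, verifies Peskir's ratio condition $\sup_{\mu>c} \bigl( (F(\mu)/\mu)\int_\mu^\infty \dif{\nu}/F(\nu) \bigr) < \infty$ by l'H\^opital, and concludes $\E[\max_{0\le t\le\tau}\abs{\mu^0_t}] \le C\,\E[H(\tau)]$ with $H = F^{-1}$, which is genuinely sublinear. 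If the later applications only required a bound linear in $\E[\tau]$, your computation (with the routine localization you mention) would essentially do; but to prove the lemma as stated you need an argument of Peskir type (or an equivalent hitting-time estimate showing the expected time to reach level $z$ grows superexponentially in $z$), not a drift-bounded-above Gronwall/BDG bound.
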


\begin{proof}
    We will use the result by Peskir \cite{peskir2001} to obtain a function $H$ which is sublinearly growing.  For some $c \in \interior{\muspace}$, let
    \[  S'(\mu) = \exp\left(- 2 \int_c^\mu \frac{\mudrift(\nu)}{\musigma(\nu)^2} \dif{\nu}\right) \]
    and
    \[  m(\dif{\nu}) = \frac{2 \dif{\nu}}{S'(\nu) \musigma(\nu)^2}. \]
    Finally, define
    \[  F(\mu) = \int_c^\mu m((c, \nu]) S'(\nu) \dif{\nu}. \]
    Note that by the assumptions of the statement, $\mudrift$ and $\musigma$ behave analogously for large positive and negative $\mu$, so, without loss of generality, we may consider only positive values of $\mu$. In particular, for large $\mu$, $S'(\mu)$ grows as $\exp(a \mu^\gamma)$ for some $a > 0$ and $\gamma \geq 1$, and we are done if we can verify the following condition:
    \[  \sup_{\mu > c} \left( \frac{F(\mu)}{\mu} \int_\mu^\infty \frac{\dif{\nu}}{F(\nu)} \right) < \infty. \]
    All involved functions are continuous, so we are done if it has a finite limit (or is negative) as $\mu \to \infty$.
    L'Hôpital's rule yields the fraction
    \[  \frac{\od{}{\mu} \int_\mu^\infty \frac{\dif{\nu}}{F(\nu)}}{\od{}{\mu} \frac{\mu}{F}} = \frac{F(\mu)}{\mu F'(\mu) - F(\mu)}. \]
    If the denominator were bounded from above, Grönwall's inequality would imply linear growth of $F$, which contradicts the growth of $S'$. Hence, the expression is either negative (and we are done), or we may use l'Hôpital's rule again:
    \[  \frac{F'(\mu)}{\mu F''(\mu)} = \frac{S'(\mu) m((c, \mu])}{\frac{2 \mu}{\musigma(\mu)^2} + \mu S''(\mu) m((c, \mu])} \xrightarrow{\mu \to \infty} 0, \]
    since $S'(\mu)/\mu S''(\mu) = \musigma(\mu)^2 / (-2 \mudrift(\mu) \mu) \to 0$. Thus, with $H = F^{-1}$, \cite{peskir2001} allows us to conclude that
    \begin{equation}
	\label{eqn:maxMuBound}
	\E \left[ \max_{0 \leq s \leq \tau} |\mu_s| \right] \leq C \E [ H(\tau) ],
    \end{equation}
    for some constant $C$ and any stopping time $\tau$. In particular, for $\tau = t$, the expression is finite and sublinearly growing in $t$.
\end{proof}

\subsection{Liquidation threshold}
\label{sec:proofarbsign}
Before we present the proof of Theorem \ref{thm:liquidationThreshold}, we present an auxiliary problem whose properties are especially useful to prove the existence of a liquidation threshold.

Consider the case where $L$ is not restricted to be nondecreasing. This means that shareholders may inject new cash into the firm at no cost. In that case, cash reserves are useless and $x$ must be distributed right away:
\[  \rovalue (x, \mu) = x + \rovalue(\mu), \]
where the auxiliary function $\rovalue$ is given by
\[  \rovalue(\mu) = \sup_\tau \E \left[ \int_0^\tau e^{-rt} \mu_t \dif{t} \right]. \]
The liquidation time $\tau$ is chosen freely by the shareholders of the firm. This is a real option problem (see for example \cite{dixit1994investment}). Intuitively, the owners of the firm exert the liquidation option when the profitability $\mu$ falls below a (negative) threshold $\mu^*$, provided $\process{\mu}$ can reach such a point. In particular, $\rovalue$ and $\mu^*$ satisfies the following boundary value problem:
\[ r \rovalue(\mu) - \mudrift(\mu) \rovalue'(\mu) - \frac{\musigma(\mu)^2}{2} \rovalue''(\mu) = \mu, \]
subject to
\[  \rovalue(\mu^*) = \rovalue'(\mu^*) = 0. \]

The second part of this theorem characterizes an optimal liquidation time. This proves useful in the original problem since $V(x, \mu) \leq \rovalue(x, \mu)$, thus showing that $V(x, \mu) \equiv x$ for levels of $\mu$ below some threshold.

\begin{lemma}
    \label{thm:realoption}
    When dividends can be of arbitrary sign, the optimal policy for shareholders is to immediately distribute the initial cash reserves at $t=0$, and to maintain them at zero forever by choosing $\dif{L}_t = \mu_t \dif{t} + \xsigma \dif{W}_t$. If $\muspace$ has no lower bound, there exists a $\mu^*$ such that the firm is liquidated when profitability falls below the threshold $\mu^*$: $  \tau = \inf \{ t > 0 : \mu_t \leq \mu^* \} $ is a maximizer.
\end{lemma}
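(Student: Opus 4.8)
The plan is to split the statement into two independent parts: the reduction $\rovalue(x,\mu)=x+\rovalue(\mu)$ together with the explicit form of the optimal $L$, and the identification of the optimal liquidation time for the reduced one--dimensional problem $\rovalue(\mu)=\sup_{\tau}\E[\int_0^{\tau}e^{-rt}\mu_t\,\dif t]$. For the inequality $\rovalue(x,\mu)\ge x+\rovalue(\mu)$ I would exhibit the candidate policy directly: distribute the lump sum $x$ at $t=0$ and then set $\dif L_t=\mu_t\,\dif t+\xsigma\,\dif W_t$ up to a stopping time $\tau$, so that $X_t\equiv 0$ on $[0,\tau]$ and $\theta=\tau$ almost surely; this is admissible ($\Delta L_0=x=X_{0-}$, and $X\ge 0$ before $\theta$) and its value is $x+\E[\int_0^{\tau}e^{-rt}\mu_t\,\dif t]$. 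For the reverse inequality, take any admissible $L$, write $L_t=x+C^{\mu}_t-X_t$ on $[0,\theta)$, and integrate by parts using that $C^{\mu}$ is continuous:
\[
  \int_0^{\theta}e^{-rt}\,\dif L_t
  = x+\int_0^{\theta}e^{-rt}\mu_t\,\dif t+\xsigma\!\int_0^{\theta}e^{-rt}\,\dif W_t
    -e^{-r\theta}X_{\theta-}-r\!\int_0^{\theta}e^{-rt}X_t\,\dif t .
\]
The stochastic integral is an $L^2$--bounded (hence uniformly integrable, zero--mean) martingale, the $\mu$--integral is absolutely integrable by Lemma~\ref{lem:maxIntegrability}, and the last two terms are nonnegative since $X\ge 0$ on $[0,\theta)$; taking expectations and the supremum over $L$ yields $\rovalue(x,\mu)\le x+\rovalue(\mu)$. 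Equality follows, and the optimiser of the reduced problem supplies the asserted optimal $L$.

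For the reduced problem I would record two structural facts. Stopping at once gives $0$, so $\rovalue(\cdot)\ge 0$; and since $\mu\mapsto\mu^{\mu}_t$ is nondecreasing (comparison for the one--dimensional SDE, whose coefficients are locally Lipschitz on $\interior\muspace$) and the running reward $\mu\mapsto\mu$ is nondecreasing, $\rovalue(\cdot)$ is nondecreasing and lower semicontinuous, the continuity in the initial condition together with Lemma~\ref{lem:maxIntegrability} providing the dominated convergence needed for semicontinuity. Hence the stopping region $\mathcal S:=\{\mu\in\muspace:\rovalue(\mu)=0\}$ is closed and of the form $\muspace\cap(-\infty,\muthresh]$ with $\muthresh:=\sup\mathcal S$. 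Standard optimal stopping theory for one--dimensional diffusions -- equivalently, a verification argument based on the free--boundary problem $r\rovalue-\mudrift\,\rovalue'-\tfrac12\musigma^2\rovalue''=\mu$ on $\{\mu>\muthresh\}$ with $\rovalue(\muthresh)=\rovalue'(\muthresh)=0$ -- then shows that $\tau=\inf\{t\ge 0:\mu_t\le\muthresh\}$ is optimal for $\rovalue(\mu)$; combined with the reduction this is the claimed maximiser.

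The delicate point -- the main obstacle -- is that $\mathcal S\neq\emptyset$, i.e.\ $\muthresh>\inf\muspace$, when $\muspace$ has no lower bound. If $\mathcal S$ were empty, optimality of the first--entry time would force $\rovalue(\mu)=g(\mu):=\E^{\mu}[\int_0^{\infty}e^{-rt}\mu_t\,\dif t]$ for all $\mu$, which contradicts $\rovalue\ge 0$ once one shows $g(\mu)\to-\infty$ as $\mu\to-\infty$. The quantitative heart of the proof is precisely this divergence: splitting at $\sigma_0:=\inf\{t:\mu_t\ge 0\}$ gives $g(\mu)\le |g(0)|+\E^{\mu}[\int_0^{\sigma_0}e^{-rt}\mu_t\,\dif t]$, and the growth conditions of Assumption~\ref{ass:regularity} -- at most linear upward drift, $|\mudrift(\mu)|\lesssim|\mu|$, together with $\musigma^2=\bigO(\mu)$ -- prevent $\process{\mu}$ from climbing a distance of order $|\mu|$ out of deep negative territory in a time that shrinks faster than some fixed power of $|\mu|$, so that $\E^{\mu}[\int_0^{\sigma_0}e^{-rt}\mu_t\,\dif t]\lesssim -|\mu|^{1/2}\to-\infty$. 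Making this escape--time estimate rigorous, with a localisation to handle the path dependence of $\musigma(\mu_t)$, is the technical core, and Lemma~\ref{lem:maxIntegrability} again enters to supply the integrability used throughout. In the non--generic case $\sup\muspace\le 0$ one simply has $\rovalue\equiv 0$ and $\muthresh=\sup\muspace$.
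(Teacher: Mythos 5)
Your skeleton is essentially the paper's: reduce to the auxiliary real-option problem via $\rovalue(x,\mu)=x+\rovalue(\mu)$ (your integration-by-parts upper bound is fine, and in fact mirrors how the paper later deduces Theorem \ref{thm:liquidationThreshold}; the paper's own reduction inside this lemma is the shorter observation that $L'=L+X^{L}$ is admissible and dominates $L$), then argue by contradiction that the contact set $\{\rovalue(\cdot)=0\}$ is nonempty — if the obstacle were never active, $\rovalue$ would coincide with the never-stop value $g(\mu)=\E\bigl[\int_0^\infty e^{-rt}\mu_t\dif{t}\bigr]$, which must tend to $-\infty$ as $\mu\to-\infty$, contradicting $\rovalue\ge 0$ — and finally invoke one-dimensional optimal stopping (the paper uses \cite{kobylanski2012optimal} together with Lemma \ref{lem:maxIntegrability}) and monotonicity to identify the stopping region as a lower threshold set and the hitting time of $\muthresh$ as a maximizer.

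The genuine gap sits exactly at the quantitative heart you yourself flag: the divergence $g(\mu)\to-\infty$ is asserted, not proved. Your route — split at $\sigma_0=\inf\{t:\mu_t\ge 0\}$ and claim an escape-time estimate giving $\E^\mu[\int_0^{\sigma_0}e^{-rt}\mu_t\dif{t}]$ of order $-|\mu|^{1/2}$ — is precisely the step you call ``the technical core,'' and no control on how fast the process can climb out of deep negative territory is actually derived; the localisation you allude to is not set up. The paper closes this step more directly: first it replaces $\mudrift$, without loss of generality, by a dominating drift growing linearly as $\mu\to-\infty$ (the corresponding value dominates the original, so it suffices to treat that case, and it puts the process in the scope of Lemma \ref{lem:maxIntegrability}); then It\^o's formula, the growth bounds on $\mudrift$ and $\musigma$, and the maximal bound \eqref{eqn:maxMuBound} yield $\E[\mu_t]\le\mu+C\,t\,(1+H(t))$ with $H$ sublinear, so integrating against $e^{-rt}$ gives $g(\mu)\le \mu/r+C'\to-\infty$ with no hitting-time analysis at all. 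If you prefer to keep the splitting at $\sigma_0$, you still owe a Gr\"onwall/comparison-type bound of this kind on $\E[\mu_{t\wedge\sigma_0}]$; as written the decisive inequality is conjectural. Two smaller omissions: the identification $\rovalue=g$ when the contact set is empty itself rests on the existence/characterisation theorem for optimal stopping, hence on the integrability furnished by Lemma \ref{lem:maxIntegrability}, which requires the same WLOG strengthening of $\mudrift$ you never make; and your monotonicity claim for $\rovalue$ should note that the comparison theorem for the $\mu$-SDE is available under Assumption \ref{ass:regularity}.
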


\begin{proof}
	Suppose $L$ be any strategy for which $X^L_t \geq 0$ until some (liquidation) time $\tau$. Then define
	\[  L'_t = L_t + X^L_t.  \]
	Since $X^L_t$ is nonnegative until $\tau$, it is clear that $X^{L'}_t = 0$ and that $L'_t \geq L_t$ for $t \leq \tau$. Hence, $L'$ is admissible whenever $L$ is, and it also produces a higher payoff.

    It remains to prove the existence of $\mu^*$. If $\muspace$ has no lower bound, but $\mudrift$ is not growing linearly as $\mu \to -\infty$, consider instead of $\process{\mu}$ another process with the same $\musigma$, but which also fulfills this growth condition. The corresponding value function dominates our original one, so it is enough to prove it in this case.

    Setting $\dif{L}_t = \mu_t \dif{t} + \xsigma \dif{W}_t$ until a stopping time $\tau$ yields
    \[  J(x, \mu; L) = x + \E \int_0^{\tau} e^{-rt} \mu_t \dif{t} + \E \int_0^{\tau} e^{-rt} \xsigma \dif{W}_t. \]
    Since the last term is zero, the value function is obtained by maximizing over $\tau$:
    \[  V(x, \mu) = x + \sup_\tau \E \int_0^{\tau} e^{-rt} \mu_t \dif{t} = x + \rovalue(\mu). \]

    We now try to find a point in which $\rovalue$ is $0$.
    Consider the equation
    \begin{equation}
	\label{eqn:realoptionDPE}
	\min \left \{ - \mu + r \phi - \mudrift (\mu) \phi' - \frac{1}{2} \musigma(\mu)^2 \phi'', \phi \right\} = 0,
    \end{equation}
    and suppose it has a solution which never touches $0$, i.e, $\phi > 0$. Then,
    \[  \phi(\mu) = \E \int_0^\infty e^{-rt} \mu_t \dif{t} = \int_0^\infty e^{-rt} \E [\mu_t] \dif{t}. \]
	Using Itô's formula, the bounds on $\mudrift$ and $\musigma$, as well as \eqref{eqn:maxMuBound}, for $\mu < 0$, we obtain
	\[ \E [\mu_t] \leq \mu + \int_0^t C_1 \left( 1 + \E\left[ \sup_{0 \leq s \leq t} |\mu_s^0|\right]\right) \dif{s} \leq \mu + t C_2 (1 + H(t)), \]
    for some constants $C_1$ and $C_2$. Hence,
	\[  \phi(\mu) \leq \int_0^\infty e^{-rt} \left(\mu + C_2 H(t)\right) \dif{t} \leq \frac{\mu}{r} + C_3, \]
    for another constant $C_3$.
    Thus, $\phi(\mu) \to - \infty$ as $\mu \to -\infty$, which contradicts that $\phi \geq 0$. We conclude that a solution $\phi$ must indeed touch $0$.

	Finally, we are done if $\rovalue$ satisfies the dynamic programming equation \eqref{eqn:realoptionDPE}. By Lemma \ref{lem:maxIntegrability} and \cite{kobylanski2012optimal}, the optimal stopping time is the hitting time of $A_0 = \{\mu : \rovalue(\mu) = 0 \} \neq 0$. Hence, the function is smooth everywhere, except possibly at $\mu^* := \sup A_0$. However, since $\musigma$ never vanishes, an argument analogous to in the proof of Theorem \ref{thm:contatzero} yields continuity also at $\mu^*$, from which \eqref{eqn:realoptionDPE} can be derived.
\end{proof}

\liquidationThreshold*
\begin{proof}
    Until the time of ruin $\theta(L)$, $L_t \leq x + \int_0^t \mu_s \dif{s} + \int_0^t \xsigma \dif{W}_s$. Hence, by stochastic integration by parts,
    \[  V(x,\mu) = \sup_L \E \int_0^{\theta(L)} e^{-rt} \dif{L}_t \leq x + \sup_L \E \int_0^{\theta(L)} e^{-rt} \mu_t \dif{t} + \E \int_0^{\theta(L)} e^{-rt} \xsigma \dif{W}_t. \]
    First observe that the last term is equal to $0$. Then, since the second term is smaller than or equal to $\rovalue(\mu)$, the result is a direct consequence of Theorem \ref{thm:realoption}.
\end{proof}

\subsection{Continuity}

When proving the continuity of the value function, we need a weak form of a dynamic programming \emph{inequality}. More precisely, for any control $L$ and stopping time $\tau$ with values between $0$ and $\theta(L)$,
\begin{equation}
	\label{eqn:easyWDPP}
	E \left[ \int_\tau^{\theta(L)} e^{-rt} \dif{L}_t \middle| \mathcal{F}_{\tau} \right] \leq e^{-r\tau} \bar{V}(X_\tau, \mu_\tau), \quad P\text{-a.s.},
\end{equation}
where $\bar{V}$ denotes the upper-semicontinuous envelope of $V$.
The reason we need to use $\bar{V}$ and not $V$ is that we do not know a priori whether $V$ is measurable. This inequality very much related to the weak dynamic programming principle \cite{bouchard2011} which also establishes a similar inequality in the other direction. However, \eqref{eqn:easyWDPP} is more primitive than the inequality in the other direction.

These measurability issues are arguably the most notable obstacles in establishing the dynamic programming principle. However, for continuous value functions, proofs of the dynamic programming principle are well known \cite{fleming2006controlled}. For the general case, we once again refer to \cite{karoui2013capacitiesI,karoui2013capacitiesII}.

In this section we establish the continuity of the value function, from which the dynamic programming principle then follows.

\begin{theorem}
    \label{thm:contatzero}
	The value function is continuous at $x = 0$.
\end{theorem}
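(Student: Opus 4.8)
The plan is to show that $V(x,\mu) \to V(0,\mu_0) = 0$ as $(x,\mu) \to (0,\mu_0)$. Since $V \geq 0$ trivially (the zero dividend strategy is admissible and $V(0,\cdot)\equiv 0$ by liquidating nothing — or rather $V(0,\cdot) = 0$ because the state is ruined immediately), the only content is the upper bound: for every $\eps > 0$ there is a neighborhood of $(0,\mu_0)$ on which $V < \eps$. The idea is that starting from a small cash level $x$, the firm has very little buffer, so with high probability it is ruined quickly before it can pay out much, and the small amount it could conceivably pay out before ruin is controlled by $x$ plus the accumulated cash flow, which is small on the (short) time horizon with high probability.

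The key steps, in order: (1) Use the dividend bound $L_t \leq x + \int_0^t \mu_s\,\dif s + \xsigma \int_0^t \dif W_s$ valid up to $\theta(L)$, so that $\E\int_0^{\theta(L)} e^{-rt}\dif L_t \leq x + \E\int_0^{\theta(L)} e^{-rt}\mu_t\,\dif t$ (the stochastic integral term vanishing by optional stopping/localization, exactly as in the proof of Theorem~\ref{thm:liquidationThreshold}). (2) Split this expectation at a small deterministic time $h$: on $[0,h]$, bound $\E\int_0^{h}e^{-rt}|\mu_t|\,\dif t \leq \E[\max_{0\le t\le h}|\mu_t|]\cdot h$, which by Lemma~\ref{lem:maxIntegrability} (applied to the shifted process, or by a standard Gronwall/Itô estimate on $\mu$ started near $\mu_0$, using the growth bounds of Assumption~\ref{ass:regularity}) is $\leq h\cdot(|\mu_0| + \delta + C H(h))$, small when $h$ is small and $\mu$ close to $\mu_0$. (3) For the tail $t > h$: here one must use that ruin is likely to have occurred. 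On the event $\{\theta(L) > h\}$ the cash reserves stayed nonnegative on $[0,h]$; starting from $x$ small, $X_t = x + C_t - L_t \leq x + C_t$, so $\{\theta(L) > h\} \subseteq \{\inf_{t\le h}(x + C_t) \geq 0\} = \{\inf_{t\le h} C_t \geq -x\}$, and the probability that the cash flow process $C$ — which has a genuinely nondegenerate Brownian component $\xsigma\,dW$ — stays above $-x$ on $[0,h]$ tends to $0$ as $x \to 0$ (for fixed $h$), uniformly for $\mu_0$ in a compact set, since $\inf_{t\le h}(\int_0^t\mu_s\,ds + \xsigma W_t)$ has a density with no atom at $0$. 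Then on this small-probability event, use Cauchy–Schwarz together with an $L^2$ bound on $\int_0^\infty e^{-rt}|\mu_t|\,\dif t$ (finite by the sublinear growth of $H$ and $r>0$, again via Lemma~\ref{lem:maxIntegrability}) to make the contribution $\leq \sqrt{P(\theta(L)>h)}\cdot\text{const}$, which is small. Crucially, all three bounds are uniform over admissible $L$, so they bound $V$.

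The main obstacle I expect is step (3): making the estimate $P(\theta(L) > h) \to 0$ as $x\to 0$ uniformly in the control $L$ and locally uniformly in $\mu_0$. The clean way is the inclusion $\{\theta(L) > h\} \subseteq \{\inf_{0\le t\le h} C_t^{\mu} \geq -x\}$, which removes $L$ entirely and reduces the question to a property of the uncontrolled process $C^\mu = \int_0^\cdot \mu_s\,ds + \xsigma W$; then one needs a small-ball/barrier estimate showing $P(\inf_{t\le h}C_t^\mu \ge -x) \to P(\inf_{t\le h}C_t^\mu \ge 0) = 0$ as $x\downarrow 0$ (the limit being $0$ because a process with a nondegenerate Brownian part immediately goes below any starting level, here $0$), with local uniformity in $\mu_0$ coming from continuity of the relevant distribution in the initial condition — this in turn uses the regularity of the SDE for $\mu$ from Assumption~\ref{ass:regularity}. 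Assembling the three pieces: given $\eps$, first fix $h$ small enough that step (2) is $< \eps/3$ and the $L^2$-tail constant times $\sqrt{\cdot}$ in step (3) will be manageable; then shrink the neighborhood of $(0,\mu_0)$ (i.e. $x$ and $|\mu-\mu_0|$ small) so that steps (1)–(3) together give $V(x,\mu) < \eps$. Continuity at $x=0$ follows.
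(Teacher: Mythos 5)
Your overall strategy---bound the payoff by $x + \E\int_0^{\theta(L)} e^{-rt}\mu_t\dif{t}$, split at a fixed horizon, and use the inclusion $\{\theta(L)>h\}\subseteq\{\inf_{t\le h}C_t\ge -x\}$ to get a control-independent estimate of the survival probability---is sound and rests on the same key mechanism as the paper's proof (dividends only hasten ruin, and since $\xsigma>0$ the uncontrolled ruin time tends to $0$ a.s.\ as $x\downarrow 0$). The genuine gap is in your tail step: you apply Cauchy--Schwarz with an $L^2$ bound on $\int_0^\infty e^{-rt}|\mu_t|\dif{t}$ and attribute it to Lemma \ref{lem:maxIntegrability}, but that lemma is a first-moment estimate, $\E[\max_{0\le t\le\tau}|\mu^0_t|]\le C\,\E[H(\tau)]$, and yields no control of second moments of $\mu$; the paper's only second-moment computation (in Lemma \ref{lem:dominatedPayoff}) concerns the stochastic integral, where $\musigma^2\in\bigO(\mu)$ reduces everything to first moments of $|\mu|$, so no such $L^2$ bound is available off the shelf under Assumption \ref{ass:regularity}. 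Replacing Cauchy--Schwarz by ``small probability times an integrable random variable'' does not immediately repair this, because both the event and the variable $\int_0^\infty e^{-rt}|\mu^{\mu^n}_t|\dif{t}$ change with the initial point $(x^n,\mu^n)$, so one needs uniform integrability or a domination/coupling argument across the family, together with the locally-uniform-in-$\mu_0$ version of your barrier estimate, which you flag but do not supply.

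The paper sidesteps exactly these issues: it takes monotone sequences, so the controlled state is dominated by a single uncontrolled reference process started at $(x^1,\mu^1)$, and instead of a deterministic horizon it chooses a strictly positive bounded stopping time $\tau$ along which $\mu_t\le|\mu^\infty|+1$ and the uncontrolled cash stays below $1$; the payoff after $\tau$ is then bounded by the single constant $V(1,|\mu^\infty|+1)$, so no moment estimate on the tail is needed, and the only probabilistic input is $P(\tau<\theta_n(0))\to 0$, which follows from $\theta_n(0)\to 0$ a.s.\ and monotone continuity of the measure. Your argument closes if you either adopt this stopping-time truncation (bounding the post-$h$ contribution by the value function at a bounded state rather than by moments of $\mu$), or genuinely establish a second-moment or uniform-integrability estimate for $\mu$ under Assumption \ref{ass:regularity}; as written, the tail estimate is not justified.
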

\begin{proof}
	Let $\{(x^n, \mu^n)\}_{n \geq 1}$ be a sequence converging to $(0, \mu^\infty)$. Without loss of generality, assume $x^n > 0$ converges monotonically to $0$. Since $V \equiv 0$ at $x=0$, it is sufficient to consider monotonically decreasing sequences in $\mu$, by monotonicity in $\mu$. For simplicity, also assume that $x^1 < 1$ and $|\mu^1 - \mu^\infty| < 1$.

	Let $\tau$ be any strictly positive, bounded stopping time such that for $t \leq \tau$, $\mu^{\mu^1}_t \leq |\mu^\infty| + 1$ and $X^{(x^1, \mu^1), 0}_t \leq 1$, $P$-a.s for the uncontrolled process corresponding to $L = 0$. Hence, we also have $X^{(x^n, \mu^n), L} \leq X^{x^1, \mu^1, 0} \leq 1$ for all $n \in \mathbb{N}$ and dividend processes $L$.

	Starting with the definition of the value function, one obtains
	\begin{align*}
		V(x^n, \mu^n) &= \sup_L E\left[ \int_0^{\tau \wedge \theta_n(L)} e^{-r t} \dif{L}_t + 1_{\{\tau < \theta_n(L)\}} \int_{\tau \wedge \theta_n(L)}^{\theta_n(L)} e^{-rt} \dif{L}_t \right] \\
					&\leq \sup_L E\left[ \int_0^{\tau \wedge \theta_n(L)} e^{-r t} \dif{L}_t + 1_{\{\tau < \theta_n(L)\}} \int_{0}^{\theta_n(L)} e^{-rt} \dif{L}_t \right] \\
					&\leq \sup_L E \left[ x^n + (|\mu^\infty| + 1) (\tau \wedge \theta_n(L)) + V(1, |\mu^\infty| + 1) 1_{\{\tau < \theta_n(L)\}} \right] \\
			& \leq x^n + (|\mu^\infty| + 1) E[\tau \wedge \theta_n(0)] + V(1, |\mu^\infty| + 1) P[\tau < \theta_n(0)].
	\end{align*}

	Now make the observation that,
	\[
		\{ \tau < \theta_{n+1}(0) \} \subseteq \{ \tau < \theta_n(0) \}.
	\]
	Since $\sigma > 0$, $\theta_n(0) \rightarrow 0$ $P$-a.s., and therefore
	\[
		\lim_{n \rightarrow \infty} P[ \tau < \theta_n(0) ] = P\left(\lim_{n \rightarrow \infty} \{\tau < \theta_n(0) \} \right) = P[ \tau \leq 0] = 0.
	\]
	By letting $n\rightarrow \infty$, we obtain $\lim_n V(x^n, \mu^n) \leq (|\mu^\infty| + 1) E[\tau]$, but since $\tau$ can be chosen arbitrarily small, we conclude that
	\[
		\lim_{n \rightarrow \infty} V(x^n, \mu^n) \leq 0.
	\]
	Since $V$ is non-negative and zero where $x=0$,
	\[
		\lim_{n \rightarrow \infty} V(x^n, \mu^n) = 0 = V(0, \mu^\infty).
	\]
	\end{proof}

%
%

\begin{lemma}
    \label{lem:dominatedPayoff}
	For each starting point $(x,\mu)$, the payoffs $\int_0^{\theta(L)} e^{-\gamma t} \dif{L}_t$ for strategies $L$ are uniformly bounded in $\|\|_{L^1(\Omega)}$ for any $\gamma > 0$.
\end{lemma}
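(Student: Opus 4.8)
The plan is to exploit the elementary admissibility bound on cumulative dividends that already underlies the proof of Theorem~\ref{thm:liquidationThreshold}, and then discount it. Fix $(x,\mu)$ and $\gamma>0$. For any admissible $L$ and any $t\in[0,\theta(L)]$, the constraints $X^{(x,\mu),L}_t\geq 0$ and $\Delta L_s\leq X_{s-}$ force
\[
  L_t \;\leq\; x+\int_0^t\mu_s\,\dif{s}+\xsigma W_t \;\leq\; x+\int_0^t|\mu_s|\,\dif{s}+\xsigma W^*_t \;=:\; Y_t ,
\]
where $W^*_t:=\sup_{s\leq t}|W_s|$ and $Y=\process{Y}$ is a continuous, nondecreasing process that does \emph{not} depend on $L$ (the only subtlety being the closed endpoint: since $\xsigma>0$ the reserves hit $0$ continuously, so $\Delta L_{\theta(L)}=0$ and the bound extends to $t=\theta(L)$). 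This $L$-free domination is the sole source of the claimed uniformity, and it is the first step.

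For the second step I rewrite the discounted integral so that $L\leq Y$ can be substituted. Using $e^{-\gamma t}=\gamma\int_t^\infty e^{-\gamma s}\,\dif{s}$ and Tonelli's theorem (all integrands are nonnegative),
\[
  \int_0^{\theta(L)}e^{-\gamma t}\,\dif{L}_t \;=\; \gamma\int_0^\infty e^{-\gamma s}\,L_{s\wedge\theta(L)}\,\dif{s} \;\leq\; \gamma\int_0^\infty e^{-\gamma s}\,Y_s\,\dif{s},
\]
the last inequality because $L_{0-}=0$ and $Y$ is nondecreasing. The right-hand side does not involve $L$, so it suffices to prove $\E\bigl[\int_0^\infty e^{-\gamma s}Y_s\,\dif{s}\bigr]<\infty$; by Tonelli again this amounts to showing that $s\mapsto\E[Y_s]$ grows subexponentially, and in fact any polynomial rate suffices.

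For the third step, $\E[Y_s]=x+\int_0^s\E|\mu^{\mu}_u|\,\dif{u}+\xsigma\,\E[W^*_s]$, and the Brownian term is of order $\sqrt{s}$ by Doob's inequality, so everything reduces to a moment bound for the profitability. I would get this directly: applying It\^o's formula to $(\mu^{\mu}_t)^2$ and localizing so that the stochastic integral is a true martingale, the assumptions that $-\mu/\mudrift(\mu)$ is bounded for large positive $\mu$, that $-\mudrift(\mu)/\mu$ is non-negative for large $-\mu$, and that $\musigma^2\in\bigO(\mu)$ together give $2\mu\,\mudrift(\mu)+\musigma(\mu)^2\leq C(1+|\mu|)$ uniformly in $\mu$; a Gr\"onwall estimate then shows $\E[(\mu^{\mu}_u)^2]$ grows at most quadratically in $u$, hence $\E|\mu^{\mu}_u|$ at most linearly. (Alternatively, one can quote the sublinear bound of Lemma~\ref{lem:maxIntegrability}, together with the comparison against a strongly mean-reverting dominating process used in the proof of Lemma~\ref{thm:realoption} to pass from starting at the origin to starting at the fixed point $\mu$.) Either way $\E[Y_s]$ is at most polynomial in $s$, the geometric weight $e^{-\gamma s}$ makes the integral finite for every $\gamma>0$, and the proof is complete. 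The only step I expect to require genuine care is this last one --- controlling moments of $\mu^{\mu}$ for an arbitrary fixed initial profitability rather than for the process started at $0$ --- and that is precisely where the ``without loss of generality'' reduction of Lemma~\ref{thm:realoption} is invoked; everything else is routine.
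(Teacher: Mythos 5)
Your argument is correct and in fact produces exactly what the continuity proof needs (an $L$-independent integrable dominating random variable), but it implements the two technical steps differently from the paper. The paper integrates by parts on the \emph{discounted} payoff, obtaining $\int_0^{\theta(L)} e^{-\gamma t}\dif{L}_t \le x + \int_0^\infty e^{-\gamma t}|\mu_t|\dif{t} + \sup_T \int_0^T e^{-\gamma t}\xsigma \dif{W}_t$, and then controls the martingale part with Doob's inequality and the It\^o isometry and the drift part with the sublinear function $H$ of Lemma \ref{lem:maxIntegrability} (after the same reduction to $\mu \in \bigO(\mudrift(\mu))$ that you mention only as a fallback). You instead dominate the \emph{undiscounted} $L_t$ by $Y_t$ and restore the discount via the Fubini identity $\int_0^{\theta}e^{-\gamma t}\dif{L}_t=\gamma\int_0^\infty e^{-\gamma s}L_{s\wedge\theta}\dif{s}$, which replaces the stochastic-integral-over-stopping-times estimate by the elementary bound $\E[W^*_s]=\bigO(\sqrt{s})$; this is arguably cleaner, and your incidental observation is apt that Lemma \ref{lem:maxIntegrability} is stated for the process started at $0$ while here the start is an arbitrary $\mu$ (the paper applies it without comment). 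Likewise your moment bound is self-contained (It\^o on $\mu_t^2$) rather than quoting Peskir. Three small points of care: (a) a plain Gr\"onwall step from $f'\le C(1+f)$ would only give exponential growth of $\E[\mu_t^2]$, which does \emph{not} suffice for every $\gamma>0$; what you need, and what your one-sided bound delivers, is the comparison for $f'\le C(1+\sqrt{f})$, giving quadratic growth --- make that explicit; (b) the asserted uniform bound $2\mu\,\mudrift(\mu)+\musigma(\mu)^2\le C(1+|\mu|)$ follows from Assumption \ref{ass:regularity} for large $|\mu|$ and on compact subsets of the interior, but near a finite endpoint of $\muspace$ it additionally requires the coefficients to stay bounded there (a point the paper's own proof is equally silent on); (c) your closed-endpoint remark can be made rigorous more directly: $\Delta L_{\theta}\le X_{\theta-}$ and continuity of $C$ give $L_{\theta}\le x+C_{\theta}$ without invoking that the reserves hit zero continuously.
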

\begin{proof}
    Without loss of generality, we may assume that $\mu \in \bigO(\mudrift(\mu))$ as $\mu \to -\infty$, since this yields a larger or equally large process $\mu_t$, and therefore also $\int_0^{\theta(L)} e^{-\gamma t} \dif{L}_t$.
    Integration by parts yields
    \begin{align*}
	\int_0^{\theta(L)} e^{-\gamma t} \dif{L}_t &\leq x + \int_0^{\theta(L)} e^{-\gamma t} \mu_t \dif{t} + \int_0^{\theta(L)} e^{-\gamma t} \sigma(\mu_t) \dif{W}_t \\
	&\leq x + \int_0^{\infty} e^{-\gamma t} |\mu_t| \dif{t} + \sup_T \int_0^T e^{-\gamma t} \sigma(\mu_t) \dif{W}_t,
    \end{align*}
    where the $\sup$ is taken over stopping times $T$. This provides the $L$-independent bound if it has finite expectation.

    The expectation of the first integral is finite, since, by Lemma \ref{lem:maxIntegrability},
    \[  E\left[ \int_0^\infty e^{-\gamma t} |\mu_t| \dif{t} \right] \leq \int_0^\infty e^{-\gamma t} H(t) \dif{t} < \infty. \]
    Similarly, by Doob's inequality, It\^o isometry and Lemma \ref{lem:maxIntegrability}, we obtain, for some $C$,
    \[  E \left[ \left( \sup_T \int_0^T e^{-\gamma t} \sigma(\mu_t) \dif{W}_t \right)^2 \right] \leq 2 \int_0^\infty e^{-2\gamma t} C\left(1 + H(t) \right) \dif{t} < \infty. \]
\end{proof}

\continuity*
	\begin{proof}
	    Denote by $\theta^n(L)$ the bankruptcy time of starting in $(x^n, \mu^n)$ and using the dividend policy $L$. Denote by $L^n$ $\varepsilon$-optimal stratgies starting in $(x^n, \mu^n)$. Similarly, let $X^n_t$ be the process associated with the starting point $(x^n, \mu^n)$ and dividend process $L^n$.
	
	First consider a sequence $(x^n, \mu^n)$ which is non-decreasing in both coordinates and converges to $(x^\infty, \mu^\infty)$. Then
	\begin{equation}
		\label{eqn:contvalbound}
		\begin{aligned}
		    V(x^\infty, \mu^\infty) - \varepsilon &\leq E\left[ \int_0^{\theta^n(L^\infty)} e^{-rt} \dif{L}^\infty_t + \int_{\theta^n(L^\infty)}^{\theta^\infty(L^\infty)} e^{-rt} \dif{L}^\infty_t \right] \\
			& \leq V(x^n, \mu^n) + E \left[ \int_{\theta^n(L^\infty)}^{\theta^\infty(L^\infty)} e^{-rt} \dif{L}^\infty_t \right].
		\end{aligned}
	\end{equation}
	Therefore, if we can show that the second term tends to zero as $n$ tends to infinity, we are done.

	By Lebesgue's dominated convergence theorem (see Lemma \ref{lem:dominatedPayoff}),
	\begin{align}
		\lim_{n\rightarrow\infty} E \left[ \int_{\theta^n(L^\infty)}^{\theta^\infty(L^\infty)} e^{-rt} \dif{L}^\infty_t \right]
		&= E \left[ \lim_{n \to \infty} \int_{\theta^n(L^\infty)}^{\theta^\infty(L^\infty)} e^{-rt} \dif{L}^\infty_t \right].
	\end{align}

	We will prove that the limit inside the expectation is 0 on the following set:
	\begin{align*}
	    \Omega' &= \left\{ \sup_L \int_0^{\theta(L)} e^{-rt} \dif{L}_t < \infty \right\} \\
		 &\quad \cap \left\{ \E \left[ \int_{\theta^n(L^\infty)}^{\theta^\infty(L^\infty)} e^{-rt} \dif{L}^\infty_t \middle| \mathcal{F}_{\theta^n(L^\infty)} \right] \leq e^{-r\theta^n(L^\infty)} \bar{V}\left(X^\infty_{\theta^n(L^\infty)}, \mu^\infty_{\theta^n(L^\infty)}\right), \forall n \in \mathbb{N} \right\},
	\end{align*}
	where $\bar{V}$ denotes the upper semicontinuous envelope of $V$.
	Note that by Lemma \ref{lem:maxIntegrability} and \eqref{eqn:easyWDPP}, $P(\Omega') = 1$. For any $\omega \in \Omega'$, consider the following two cases:

	First, consider any divergent subsequence $\theta^k = \theta^{n(k)}(L^\infty)(\omega)$.
	Then,
	\[  \left( \int_{\theta^k}^{\theta^\infty(L^\infty)} e^{-rt} \dif{L}^\infty_t \right) (\omega) \leq e^{-r \theta^k/2} \left( \int_{\theta^k}^{\theta^\infty(L^\infty)} e^{-rt/2} \dif{L}^\infty_t \right) (\omega) \xrightarrow{k \to \infty} 0, \]
	because $\omega \in \Omega'$ ensures that the integral factor is bounded, and the exponential factor converges to 0.

	On the other hand, let $\theta^k = \theta^{n(k)}(L^\infty)(\omega)$ be a bounded subsequence.
	We then wish to use the continuity of $V$, and therefore also of $\bar{V}$, at 0 to argue that
	\[
	    \lim_{k\rightarrow\infty} \bar{V}\left(X^\infty_{\theta^k}(\omega), \mu^\infty_{\theta^k}(\omega) \right) = 0.
	\]
	Since $\sup_k \theta^k(\omega) < \infty$, $X^\infty_{\theta^k} = X^\infty_{\theta^k} - X^k_{\theta^k} = x^\infty - x^k + \int_0^{\theta^k} \mu^\infty_t - \mu^k_t \dif{t} \xrightarrow{k\to\infty} 0$, because of continuity with respect to initial points. Therefore, since $V$ is increasing, it is sufficient to find a bound to $\mu^\infty_{\theta^k}(\omega)$.
	
	Begin by considering the process $M^\infty_t = \sup_{0 \leq s \leq t} \mu^\infty_s$. Then, since $\theta^k$ is a bounded sequence and $M^\infty_t$ is continuous, $M^\infty_{\theta^k}(\omega)$ is bounded by some constant $C$. Therefore, by Theorem \ref{thm:contatzero},
	\[ \lim_{k\rightarrow\infty} \bar{V}\left(X^\infty_{\theta^k}(\omega), \mu^\infty_{\theta^k}(\omega) \right) \leq \lim_{k\rightarrow\infty} \bar{V}\left(X^\infty_{\theta^k}(\omega), C\right) = \bar{V}(0, C) = 0.  \]

	By the preceding arguments, it holds that for any bounded subsequence and any subsequence converging to $\infty$,
	\[\left( \int_{\theta^n(L^\infty)}^{\theta^\infty(L^\infty)} e^{-rt} \dif{L}^\infty_t \right) (\omega)\]
	converges to 0 for every $\omega \in \Omega'$.
	As a consequence of this, the whole sequence converges to 0, for every $\omega \in \Omega'$, i.e., $P$-a.s.
	Returning to \eqref{eqn:contvalbound}, this yields
	\[
		V(x^\infty, \mu^\infty) - \varepsilon \leq \lim_{n\rightarrow\infty} V(x^n, \mu^n) \leq V(x^\infty, \mu^\infty),
	\]
	by monotonicity. Since this holds for any choice of $\varepsilon>0$, equality is obtained.

	Now let $(x^n, \mu^n)$ be non-increasing in both coordinates and converge to $(x^\infty, \mu^\infty)$. Then, in the same manner as above,
	\[
		\lim_{n\rightarrow\infty} V(x^n, \mu^n) - \varepsilon \leq V(x^\infty, \mu^\infty) + E\left[ \lim_{n\rightarrow\infty} \int_{\theta^\infty(L^n)}^{\theta^n(L^n)} e^{-rt} \dif{L}^n_t \right],
	\]
	and by analogous arguments,
	\[
		\lim_{n\rightarrow\infty} V(x^n, \mu^n) = V(x^\infty, \mu^\infty).
	\]

	As a final step, consider an arbitrary convergent sequence $(x^n, \mu^n)$. By monotonicity,
	\[
		V\left(\inf_{m \geq n} x_m, \inf_{m \geq n} \mu_m\right) \leq V(x^n, \mu^n) \leq V\left(\sup_{m \geq n} x_m, \sup_{m \geq n} \mu_m\right).
	\]
	Since the sequences $(\inf_{m \geq n} x_m, \inf_{m \geq n} \mu_m)$ and $(\sup_{m \geq n} x_m, \sup_{m \geq n} \mu_m)$ are non-decreasing and non-increasing, respectively, it follows that
	\[
		\lim_{n\rightarrow\infty} V(x^n, \mu^n) = V(x^\infty, \mu^\infty),
	\]
	and $V$ is continuous everywhere.
\end{proof}

\subsection{Comparison principle}

	\begin{lemma}
	\label{lem:transformed_dpe}
		If a function $u$ is a viscosity subsolution to \eqref{eqn:dpe}, then
		\begin{equation} \label{eqn:transformation}
		    \tilde{u}(x, \mu) := e^{-\eta x - \eta g(\mu)} u(x, \mu)
		\end{equation}
		is a viscosity subsolution to
		\begin{equation}
			\label{eqn:transformed_dpe}
			\begin{aligned}
				\min \Big\{ & \Big(r - \eta \mu - \eta g'(\mu) \kappa(\mu) - \eta^2 \Sigma_{11} \\
				& \quad - \eta^2 g'(\mu)^2 \Sigma_{22} - \eta g''(\mu) \Sigma_{22} - 2 \eta^2 g'(\mu) \Sigma_{12} \Big) \tilde{V} \\
				& - (\mu + \eta \Sigma_{11} + 2 \eta g'(\mu) \Sigma_{12}) \tilde{V}_x \\
				& - (\kappa(\mu) + \eta g'(\mu) \Sigma_{22} + 2 \eta \Sigma_{12} ) \tilde{V}_\mu \\
				& - \Tr \Sigma D^2 \tilde{V}, \\
				& \qquad \qquad \eta \tilde{V} + \tilde{V}_x - e^{-\eta x - \eta g(\mu)}  \Big\} = 0, \text{ in } \mathbb{R}_{>0} \times \muspace,
			\end{aligned}
		\end{equation}
		for any given $\eta$ and $g \in C^2(\mathbb{R})$. A corresponding statement is true for supersolutions.
	\end{lemma}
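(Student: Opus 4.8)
The plan is to carry out a direct change-of-variables computation at the level of test functions, using the definition of viscosity subsolution. Fix a point $(x_0, \mu_0) \in \mathbb{R}_{>0} \times \muspace$ and suppose $\varphi \in C^2$ touches $\tilde u$ from above at $(x_0,\mu_0)$, i.e.\ $\tilde u - \varphi$ has a local maximum there. Setting $w(x,\mu) := e^{\eta x + \eta g(\mu)}$, the function $\psi := w \varphi$ is $C^2$ and touches $u = w \tilde u$ from above at $(x_0,\mu_0)$ (here I use that $w > 0$, so multiplication by $w$ preserves the local-max property up to the usual harmless argument: $u - \psi = w(\tilde u - \varphi)$ has a local max wherever $\tilde u - \varphi$ does, since $w$ is continuous and positive). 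Since $u$ is a viscosity subsolution of \eqref{eqn:dpe}, we get
\[
  \min\{ r\psi(x_0,\mu_0) - \mathcal{L}\psi(x_0,\mu_0),\ \psi_x(x_0,\mu_0) - 1 \} \le 0 .
\]
The remaining work is to rewrite each of the two terms in this minimum in terms of $\varphi$ and its derivatives, which will produce precisely the two terms of \eqref{eqn:transformed_dpe} evaluated with $\tilde V$ replaced by $\varphi$.

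The bulk of the calculation is the product rule. Writing $\psi = w\varphi$ with $w = e^{\eta x + \eta g(\mu)}$, one has $w_x = \eta w$, $w_\mu = \eta g'(\mu) w$, $w_{xx} = \eta^2 w$, $w_{x\mu} = \eta^2 g'(\mu) w$, and $w_{\mu\mu} = (\eta^2 g'(\mu)^2 + \eta g''(\mu)) w$. Expanding
\[
  \mathcal{L}\psi = \mu \psi_x + \kappa(\mu)\psi_\mu + \Sigma_{11}\psi_{xx} + 2\Sigma_{12}\psi_{x\mu} + \Sigma_{22}\psi_{\mu\mu}
\]
via Leibniz and dividing through by $w(x_0,\mu_0) > 0$ converts $r\psi - \mathcal{L}\psi \le 0$ into the statement that the first argument of the $\min$ in \eqref{eqn:transformed_dpe}, with $\tilde V$ replaced by $\varphi$ and evaluated at $(x_0,\mu_0)$, is $\le 0$; collecting the coefficients of $\varphi$, $\varphi_x$, $\varphi_\mu$ reproduces exactly the long bracket in the statement (the $\varphi$-coefficient absorbs the three second-order $w$-terms $\eta^2\Sigma_{11}$, $2\eta^2 g'\Sigma_{12}$, $\eta^2 g'^2\Sigma_{22} + \eta g''\Sigma_{22}$ together with the first-order $\eta\mu$ and $\eta g'\kappa$, while the cross and pure second derivatives of $w$ against first derivatives of $\varphi$ feed the modified drift coefficients $\eta\Sigma_{11} + 2\eta g'\Sigma_{12}$ and $\eta g'\Sigma_{22} + 2\eta\Sigma_{12}$). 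For the gradient term, $\psi_x - 1 = w(\varphi_x + \eta\varphi) - 1 \le 0$ is, after dividing by $w$, exactly $\eta\varphi + \varphi_x - e^{-\eta x - \eta g(\mu)} \le 0$. Hence the minimum of the two transformed expressions (in $\varphi$) at $(x_0,\mu_0)$ is $\le 0$, which is the subsolution property for \eqref{eqn:transformed_dpe}.

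For the supersolution statement one repeats the argument verbatim with test functions touching from below and all inequalities reversed; positivity of $w$ is again what makes the correspondence between test functions for $u$ and for $\tilde u$ exact. The same computation also covers the boundary behaviour at $x = 0$ trivially, since the transformation does not affect the sign of $u$ there. I do not anticipate a genuine obstacle here: the only point requiring a word of care is the elementary observation that multiplication by the smooth, strictly positive factor $w$ is a bijection between $C^2$ test functions touching $u$ at a point and $C^2$ test functions touching $\tilde u = u/w$ at that point, preserving the touching direction; everything else is the deterministic product-rule bookkeeping sketched above.
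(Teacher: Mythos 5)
Your proposal is correct and is essentially the paper's own proof: the paper likewise passes between test functions via $\varphi = e^{\eta x + \eta g(\mu)}\tilde\varphi$ (using positivity of the exponential weight and the normalization that the test function touches with value zero at the maximum), plugs into the viscosity inequality for \eqref{eqn:dpe}, and reads off \eqref{eqn:transformed_dpe}; the paper simply leaves the product-rule bookkeeping, which you spell out, to the reader. The only point to double-check is that bookkeeping against the stated coefficients of $\tilde V_x$ and $\tilde V_\mu$ (the factors of $2$ coming from $\Sigma_{11}\psi_{xx}$ and $\Sigma_{22}\psi_{\mu\mu}$), but this does not affect the method and is immaterial to how the lemma is used in the comparison proof.
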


	\begin{proof}
		Let $u$ be a viscosity subsolution to \eqref{eqn:dpe}. Let $\tilde{\varphi} \in C^2$ and $(x_0, \mu_0)$ be a local maximum of $\tilde{u} - \tilde{\varphi}$ where $(\tilde{u} - \tilde{\varphi})(x_0, \mu_0) = 0$. Finally, define
		\[
		    \varphi(x, \mu) := e^{\eta x + \eta g(\mu)} \tilde{\varphi}(x, \mu).
		\]
		Then $(x_0, \mu_0)$ is also a local maximum of $u - \varphi$. Using the viscosity property of $u$ and plugging in $\varphi$ yields the viscosity property for $\tilde{u}$ and the transformed equation \eqref{eqn:transformed_dpe}.
	\end{proof}

	\begin{lemma}
	    The parameter $\eta$ and function $g$ can be chosen in such a way that the coefficient of $\tilde{V}$ in \eqref{eqn:transformed_dpe} is strictly positive.
		\label{lem:monotonicity}
	\end{lemma}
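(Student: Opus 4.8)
The plan is to produce an explicit pair $(\eta, g)$: take $\eta>0$ small and let $g \in C^\infty(\mathbb{R})$ have derivative equal to a suitable positive constant near $+\infty$ and a suitable negative constant near $-\infty$, with $g''$ compactly supported. First I would note that the coefficient of $\tilde V$ in \eqref{eqn:transformed_dpe}, namely
\[
    c(\mu) := r - \eta\mu - \eta g'(\mu)\kappa(\mu) - \eta^2\sigma^2 - \eta^2 g'(\mu)^2\tilde\sigma(\mu)^2 - \eta g''(\mu)\tilde\sigma(\mu)^2 - 2\eta^2 g'(\mu)\rho\sigma\tilde\sigma(\mu),
\]
depends on the state only through $\mu$ (since $g$ does), so it suffices to arrange $c(\mu)>0$ for all $\mu\in\mathcal{M}$; here $r>0$ is the discount rate. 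Bounding the cross term by $-2\eta^2 g'\rho\sigma\tilde\sigma \ge -\eta^2 g'^2\tilde\sigma^2 - \eta^2\rho^2\sigma^2$ reduces the task to bounding from below
\[
    r - \eta\mu - \eta g'(\mu)\kappa(\mu) - 2\eta^2 g'(\mu)^2\tilde\sigma(\mu)^2 - \eta g''(\mu)\tilde\sigma(\mu)^2 - \eta^2\sigma^2(1+\rho^2).
\]

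I would then split $\mathcal{M}$ into two tails and a compact core. By Assumption \ref{ass:regularity} there are constants $C_1, C_2, C_3$ and an $M$ with $\kappa(\mu)\le -\mu/C_1$ for $\mu\ge M$, with $0\le \kappa(\mu)\le C_2|\mu|$ for $\mu\le -M$, and with $\tilde\sigma(\mu)^2\le C_3|\mu|$ for $|\mu|\ge M$. Choose $g$, depending only on these structural constants, with $g'\equiv 2C_1$ on $[M,\infty)$, $g'\equiv -1$ on $(-\infty,-M]$, and $g''$ supported in $[-M,M]$; for instance $g'=-1+(2C_1+1)\chi$ with $\chi$ a smooth cutoff that is $0$ below $-M$ and $1$ above $M$, and $g(\mu)=\int_0^\mu g'$. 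On the right tail $\mu\ge M$ one has $-\eta g'(\mu)\kappa(\mu)=2\eta C_1(-\kappa(\mu))\ge 2\eta\mu$ and $\tilde\sigma(\mu)^2\le C_3\mu$, giving $c(\mu)\ge r - \eta^2\sigma^2(1+\rho^2) + \eta\mu(1 - 8\eta C_1^2 C_3)$; on the left tail $\mu\le -M$ one has $-\eta\mu=\eta|\mu|$, $-\eta g'(\mu)\kappa(\mu)=\eta\kappa(\mu)\ge 0$ and $\tilde\sigma(\mu)^2\le C_3|\mu|$, giving $c(\mu)\ge r - \eta^2\sigma^2(1+\rho^2) + \eta|\mu|(1 - 2\eta C_3)$. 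Hence for every $\eta$ below an explicit threshold depending only on $C_1, C_3, \sigma, \rho, r$, both right-hand sides are $\ge r/2$. If $\mathcal{M}$ is bounded above and/or below, the corresponding tail is simply absent.

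It remains to treat the compact set $K := \mathcal{M}\cap[-M,M]$, on which $\mu$, $\kappa$, $\tilde\sigma^2$, $g'$, $g''$ are bounded. There $|c(\mu)-r|\le \eta A + \eta^2 B$ for constants $A,B$ not depending on $\eta$, so $c\ge r/2$ on $K$ once $\eta$ is small enough. Taking $\eta$ to be the minimum of the thresholds coming from the two tails and from $K$ yields $c(\mu)\ge r/2>0$ uniformly over $\mathcal{M}$, which proves the lemma (and in fact supplies the uniform positive lower bound the comparison argument will use).

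The main obstacle is the order of quantifiers rather than any single estimate: $g$ must be fixed first, out of the structural constants of Assumption \ref{ass:regularity} alone, so that the compact core $K$ is pinned down before $\eta$ is selected; only then can one $\eta$ be taken small enough to win simultaneously on both tails and on $K$. The single genuinely binding requirement in the construction is that $g'$ be a constant strictly larger than $C_1$ near $+\infty$, so that the linear drift term $-\eta\mu$ (inherited from $\mu V_x$) is dominated by $-\eta g'\kappa$ — this works precisely because $\kappa$ is of order $-\mu$ there — while $g'$ must be kept non-positive near $-\infty$, where $\kappa\ge 0$, so the same term does not flip sign. The remaining conditions, in particular that $g'$ be bounded, which is forced by $-\eta^2 g'^2\tilde\sigma^2$ together with $\tilde\sigma^2\in\bigO(\mu)$, and that $g''$ be controlled, are then routine.
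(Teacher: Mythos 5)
Your proof is correct and follows essentially the same route as the paper: choose $g$ with $g'$ equal to a positive constant (large enough relative to the bound on $-\mu/\kappa$) on the right tail and a negative constant on the left tail where $\kappa\ge 0$, with $g''$ compactly supported, then take $\eta$ small so the $\eta^2$-terms (at most linear in $|\mu|$ by the growth conditions) and the compact core are controlled. The extra explicit bookkeeping (AM--GM on the cross term, explicit thresholds for $\eta$) only makes quantitative what the paper leaves qualitative.
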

	\begin{proof}
	    Fix any large $y > 0$ and let $g$ be a twice differentible function with
	    \[ \arraycolsep=0pt
		g'(\mu) = \left\{\begin{array}{rll}
			- &\eta_-, &\quad \text{if } \mu < -y, \\
		   &\eta_+, &\quad \text{if } \mu > y, \end{array}\right.
		\]
	    for strictly postive constants $\eta_-$ and $\eta_+$ as well as $\mu \in [-y,y]^c$.\footnote{This can be done by a mollification argument on $g'$.} For any such choice, the coefficient is strictly positive in $[-y, y]$, provided $\eta$ is small enough. Moreover, with our choice of $g$, the condition reduces to
	    \[	r - \eta (\mu + \eta_\pm \kappa(\mu)) - \eta^2 (\Sigma_{11} + \eta_\pm^2 \Sigma_{22} + 2 \eta_\pm \Sigma_{12} ) > 0, \quad \text{in } [-y,y]^c. \]
	    Note that the last two terms both grow at most linearly in $\mu$, by the growth conditions on $\kappa$, $\tilde{\sigma}$, and $\sigma$. Furthermore, since $\kappa$ is negative and linearly growing for large (positive) $\mu$, we can choose $\eta_+$ such that $-(\mu + \eta_+ \kappa(\mu))$ is linearly increasing. Then, for sufficiently small $\eta$, the whole expression is increasing, for large $\mu$.

	    Similarly, if both $\eta_-$ and $\eta$ are chosen sufficiently small, the same is true also for large, negative $\mu$. Hence, for such a choice of $\eta$ and $g$, the coefficient is strictly positive.
	\end{proof}

	\begin{remark}
	    Assumption \ref{ass:regularity} could possibly be generalized by finding strict supersolutions to the equation
	    \[ r - \eta \mu - \eta g'(\mu) \kappa(\mu) - \eta^2 \Sigma_{11} - \eta^2 g'(\mu)^2 \Sigma_{22} - \eta g''(\mu) \Sigma_{22} - 2 \eta^2 g'(\mu) \Sigma_{12} = 0, \]
	    since this is sufficient for the transformed equation to be proper.
	\end{remark}

	\comparison*
	\begin{proof}[Proof of Theorem \ref{thm:comparison}]
		Comparison is shown for the transformed DPE \eqref{eqn:transformed_dpe} with $\eta$ chosen as in Lemma \ref{lem:monotonicity}. Since the transformation \eqref{eqn:transformation} is sign-preserving, this is sufficient to establish comparison for \eqref{eqn:dpe} thanks to Lemma \ref{lem:transformed_dpe}. For the sake of simpler presentation later on, \eqref{eqn:transformed_dpe} is shortened to
		\[
			\begin{aligned}
			    \min \{ & f \tilde{V} - f^x \tilde{V}_x - f^\mu \tilde{V}_\mu - \Tr \Sigma D^2 \tilde{V}, \\
				    & \qquad \qquad \eta \tilde{V} + \tilde{V}_x - e^{-\eta x - \eta g(\mu)} \} = 0.
			\end{aligned}
		\]
		Note that the coefficients $f^x$, and $f^\mu$ are locally Lipschitz continuous on the interior of the domain, and the coefficient $f$ is continuous.

		Denote by $\tilde{u}$ and $\tilde{v}$ the functions transformed as in \eqref{eqn:transformation}. We note that these tend to 0 at infinity and that $\tilde{u}$ as well as $\tilde{v}$ are bounded.
		We distinguish between two cases:
		\begin{enumerate}
		    \item The function $\tilde{u} - \tilde{v}$ attains a maximum in $[0, \infty) \times \interior{\muspace}$. If the maximum is at $x=0$, we are done. Otherwise, the maximum is attained in the interior $\interior{\xmuspace}$.
		    \item \label{itm:bdryMax} There exists a maximizing sequence converging to a point $(\hat{x}, \hat{\mu})$ in $[0,\infty) \times \bdry \muspace$. Without loss of generality, assume that $\hat{\mu}$ is a lower boundary point. An upper boundary point is handled analogously. The method employed here originates in \cite{amadori2007}.

			For some small $\gamma > 0$, let $N = \{ (x, \mu) : \mu - \hat{\mu} < \gamma \} \cap ([0,\infty) \times \muspace)$ and define
			\[  \Psi_\delta(x,\mu) = \tilde{u}(x, \mu) - \tilde{v}(x, \mu) - \delta h(\mu), \quad (x,\mu) \in N, \]
			for $\delta \geq 0$ and
			\[  h(x, \mu) = \int_\mu^{\hat{\mu} + \gamma} e^{C (\xi - \hat{\mu})} (\xi - \hat{\mu})^{-1} \dif{\xi}, \quad (x,\mu) \in N, \]
			with
			\[  C = \sup \left\{ \frac{1}{\mu} - \frac{2 f^\mu(\mu)}{\musigma(\mu)^2} : 0 < \mu - \hat{\mu} < \gamma \right\}. \]
			Note that by Assumption \ref{ass:regularity}, $C < \infty$. It is easily verified that $h > 0$, $h(\mu) \to \infty$ as $\mu \to \hat{\mu}$, and that
			\[  f^\mu h' + \frac{\musigma^2}{2} h'' \leq 0, \quad \text{in } N. \]
			Hence, $\tilde{w}_\delta := \tilde{u} - \delta h$ is also a subsolution in $N$.

			Let $(x_n, \mu_n) \to (\hat{x}, \hat{\mu})$ be the maximizing sequence, and set $\delta = \delta_n = 1 / n h(\mu_n)$. Then $\delta \to 0$ as $\mu \to \hat{\mu}$. Thus,
			\[  \sup_N (\tilde{u} - \tilde{v}) \geq \sup_N \Psi_\delta \geq \Psi_\delta (x_n, \mu_n) = (\tilde{u} - \tilde{v})(x_n, \mu_n) - 1 / n, \]
			so
			\[  \lim_{\delta \to 0} \sup_N \Psi_\delta = \sup_N \left(\tilde{u} - \tilde{v}\right). \]
			Moreover, $\Psi_\delta$ attains a maximum $(x_\delta, \mu_\delta) \in \closure{N}$. For sufficiently small $\delta>0$, a maximum is attained in the interior, or otherwise a maximum of $\tilde{u} - \tilde{v}$ is attained for $\mu = \hat{\mu} + \gamma \in \interior{\xmuspace}$. It follows that
			\[  \sup_N (\tilde{u} - \tilde{v}) \geq (\tilde{u} - \tilde{v})(x_\delta, \mu_\delta) = \sup_N \Psi_\delta + \delta h(\mu_\delta) \geq \sup_N \Psi_\delta, \]
			so $\delta h(\mu_\delta) \to 0$. If $\max_N (\tilde{w}_\delta - \tilde{v}) \leq 0$,
			\[  \sup_\xmuspace (\tilde{u} - \tilde{v}) = \lim_{\delta \to 0} \max_N (\tilde{w}_\delta - \tilde{v}) \leq 0 \]
			follows. It therefore leads to no loss of generality to assume that $\tilde{u} - \tilde{v}$ attains a maximum in $\interior{\xmuspace}$.
		\end{enumerate}

		By the preceding argument, we may assume that $\tilde{u} - \tilde{v}$ attains local maximum $(x_0, \mu_0) \in \interior{\xmuspace}$. Let $N \subseteq N' \subseteq \xmuspace$ be two neighborhoods of $(x_0, \mu_0)$ in which this $(x_0, \mu_0)$ is a maximum and with $\closure{N} \subseteq N'$. For all $\epsilon > 0$, the function
		\[
		    \Phi_\epsilon(x, \mu, y, \nu) := \tilde{u}(x, \mu) - \tilde{v}(y, \nu) - \frac{1}{2 \epsilon} \left( \abs{x - y}^2 + \abs{\mu - \nu}^2 \right) - \norm{(x,\mu) - (x_0, \mu_0)}_2^4.
		\]
		has a maximum in $\closure{N} \times \closure{N}$, which we denote by $(x_\epsilon, \mu_\epsilon, y_\epsilon, \nu_\epsilon)$. In particular as $\epsilon \to 0$, the sequence converges along a subsequence.

		From the construction of $\Phi_\epsilon$ it follows that
		\[
		    \frac{1}{2 \epsilon} \left( \abs{x_\epsilon - y_\epsilon}^2 + \abs{\mu_\epsilon - \nu_\epsilon}^2 \right) \leq \tilde{u}(x_\epsilon, \mu_\epsilon) - \tilde{v}(y_\epsilon, \nu_\epsilon) - \max_{\closure{N}} (\tilde{u} - \tilde{v}).
		\]
		Observing that the superior limit of the right hand side is bounded from above by 0 yields
		\[
			\limsup_{\epsilon \rightarrow 0} \frac{1}{2 \epsilon} \left( \abs{x_\epsilon - y_\epsilon}^2 + \abs{\mu_\epsilon - \nu_\epsilon}^2 \right) \leq 0.
		\]
		This estimate is used later in the proof. Moreover, $(x_\epsilon, \mu_\epsilon) \to (x_0, \mu_0)$, which means they are local maxima in $N'$.

		By the Crandall--Ishii lemma, there exist matrices $X_\epsilon$ and $Y_\epsilon$ such that
		\[ \quad (p_\epsilon, X_\epsilon) \in \bar{J}^{2, +} u(x_\epsilon, \mu_\epsilon), \quad (p_\epsilon, Y_\epsilon) \in \bar{J}^{2, -} v(y_\epsilon, \nu_\epsilon) \]
		and
		\[  X_\epsilon \leq Y_\epsilon + \bigO\left(\frac{1}{\epsilon}\norm{(x,\mu) - (x_0, \mu_0)}_2^2 + \norm{(x,\mu) - (x_0, \mu_0)}_2^4\right), \]
		for
		\[ p_\epsilon = \frac{1}{\epsilon} (x_\epsilon - y_\epsilon, \mu_\epsilon - \nu_\epsilon).  \]
		In particular, $X_\epsilon \leq Y_\epsilon + \smallo(1)$ as $\epsilon \to 0$.
		Using the viscosity property of $\tilde{u}$ and $\tilde{v}$ yields
		\begin{multline*}
		    \min \Big\{ f(\mu_\epsilon) \tilde{u}(x_\epsilon, \mu_\epsilon)
			- f^x(\mu_\epsilon) \frac{x_\epsilon - y_\epsilon}{\epsilon} - f^\mu(\mu_\epsilon) \frac{\mu_\epsilon - \nu_\epsilon}{\epsilon} - \Tr \Sigma(\mu_\epsilon) D^2 \tilde{X_\epsilon}, \\
			\eta \tilde{u}(x_\epsilon, \mu_\epsilon) + \frac{x_\epsilon - y_\epsilon}{\epsilon} - e^{-\eta x_\epsilon - \eta g(\mu_\epsilon)} \Big\} \leq 0.
		\end{multline*}
		and
		\begin{equation}
			\label{eqn:supersolution}
			\begin{multlined}[c][0.9\textwidth]
			    \min \Big\{ f(\nu_\epsilon) \tilde{v}(y_\epsilon, \nu_\epsilon)
				- f^x(\nu_\epsilon) \frac{x_\epsilon - y_\epsilon}{\epsilon} - f^\mu(\nu_\epsilon) \frac{\mu_\epsilon - \nu_\epsilon}{\epsilon} - \Tr \Sigma(\nu_\epsilon) D^2 \tilde{Y_\epsilon}, \\
			    \eta \tilde{v}(y_\epsilon, \nu_\epsilon) + \frac{x_\epsilon - y_\epsilon}{\epsilon} - e^{-\eta y_\epsilon - \eta g(\nu_\epsilon)} \Big\} \geq 0.
			\end{multlined}
		\end{equation}
		The rest of the proof is dividend into two cases, depending on whether
		\begin{equation}
			\label{eqn:case1}
			\begin{multlined}[c][0.9\textwidth]
			    f(\mu_\epsilon) \tilde{u}(x_\epsilon, \mu_\epsilon)
			    - f^x(\mu_\epsilon) \frac{x_\epsilon - y_\epsilon}{\epsilon} - f^\mu(\mu_\epsilon) \frac{\mu_\epsilon - \nu_\epsilon}{\epsilon} - \Tr \Sigma(\mu_\epsilon) D^2 \tilde{X_\epsilon} \leq 0
			\end{multlined}
		\end{equation}
		or
		\begin{equation}
			\label{eqn:case2}
			\eta \tilde{u}(x_\epsilon, \mu_\epsilon) + \frac{x_\epsilon - y_\epsilon}{\epsilon} - e^{- \eta x_\epsilon - \eta g(\mu_\epsilon)} \leq 0.
		\end{equation}

		{\it Case 1.} Assume \eqref{eqn:case1}. Using \eqref{eqn:supersolution} we arrive at
		\begin{multline*}
		    f(\mu_\epsilon) (\tilde{u}(x_\epsilon, \mu_\epsilon) - \tilde{v}(y_\epsilon, \nu_\epsilon))
		    + (f(\mu_\epsilon) - f(\nu_\epsilon)) \tilde{v}(y_\epsilon, \nu_\epsilon) \\
		    - (f^x(\mu_\epsilon) - f^x(\nu_\epsilon)) \frac{x_\epsilon - y_\epsilon}{\epsilon}
		    - (f^\mu(\mu_\epsilon) - f^\mu(\nu_\epsilon)) \frac{\mu_\epsilon - \nu_\epsilon}{\epsilon} \\
		    -\Tr (\Sigma(\mu_\epsilon) - \Sigma(\nu_\epsilon))Y_\epsilon \leq \Tr \Sigma(\mu_\epsilon) (X_\epsilon - Y_\epsilon) \in \smallo(1)
		\end{multline*}
		Using the (local) Lipschitz continuity of the coefficients as well as the quadratic convergence rates of $x_\epsilon - y_\epsilon$ and $\mu_\epsilon - \nu_\epsilon$, we find that
		\begin{equation*}
			\begin{multlined}[c][0.9\textwidth]
			    f(\mu_0) (\tilde{u} - \tilde{v})(x_0, \mu_0)
			    = \limsup_{\epsilon \rightarrow 0} f(\mu_\epsilon) (\tilde{u}(x_\epsilon, \mu_\epsilon) - \tilde{v}(y_\epsilon, \nu_\epsilon)) \leq 0
			\end{multlined}
		\end{equation*}
		We conclude that
		\[
			0 \geq (\tilde{u} - \tilde{v})(x_0, \mu_0).
		\]

		{\it Case 2.} Assume \eqref{eqn:case2}. Using \eqref{eqn:supersolution} yields
		\[
		    \eta (\tilde{u}(x_\epsilon, \mu_\epsilon) - \tilde{v}(y_\epsilon, \nu_\epsilon)) \leq e^{- \eta x_\epsilon - \eta g(\mu_\epsilon)} - e^{-\eta y_\epsilon - \eta g(\nu_\epsilon)}.
		\]
		Once again we use the convergence results, and once again we conclude that
		\[
			0 \geq (\tilde{u} - \tilde{v})(x_0, \mu_0) = \max_\xmuspace (\tilde{u} - \tilde{v}).
		\]

		The inequality $\tilde{u} \leq \tilde{v}$ holds at any local maximum. Moreover, as mentioned in the beginning of the proof, we may assume that $\tilde{u} - \tilde{v}$ is attains its maximum on the interior.
		The theorem statement follows from the fact that
		\[
			\tilde{u} \leq \tilde{v} \Longleftrightarrow u \leq v.
		\]
	\end{proof}

\bibliographystyle{plainnat}
\bibliography{optimaldividends}

\end{document}